\documentclass[final,hidelinks,onefignum,onetabnum]{siamart251216}

\usepackage{cite}
\usepackage{subcaption}
\usepackage{booktabs}
\usepackage{mathtools}
\usepackage{amssymb}
\usepackage{xcolor}
\usepackage{needspace}
\usepackage{amsfonts}
\usepackage{graphicx}
\usepackage{epstopdf}
\usepackage{algorithmic}
\ifpdf
  \DeclareGraphicsExtensions{.eps,.pdf,.png,.jpg}
\else
  \DeclareGraphicsExtensions{.eps}
\fi

\newsiamremark{remark}{Remark}
\newsiamremark{hypothesis}{Hypothesis}
\crefname{hypothesis}{Hypothesis}{Hypotheses}
\newsiamthm{claim}{Claim}
\newsiamremark{fact}{Fact}
\crefname{fact}{Fact}{Facts}

\headers{Toeplitz Infinite GMRES}{W. Gao and F. Jiang}

\title{Toeplitz Infinite GMRES for Parameterized Linear Systems}

\author{Weiguo Gao\thanks{School of Mathematical Sciences, Fudan University, Shanghai 200433, China \& Shanghai Key Laboratory of Contemporary Applied Mathematics, Shanghai, China, 200433 (\email{wggao@fudan.edu.cn}).}
\and Feiyang Jiang\thanks{School of Mathematical Sciences, Fudan University, Shanghai 200433, China (\email{fyjiang26@m.fudan.edu.cn}).}}

\usepackage{amsopn}
\DeclareMathOperator{\diag}{diag}

\newcommand{\cK}{\mathcal{K}}

\newcommand{\Ibar}[1]{\begin{bmatrix}I_{#1}\\0_{1\times #1}\end{bmatrix}}
\newcommand{\Jbar}[1]{\begin{bmatrix}0_{1\times #1}\\I_{#1}\end{bmatrix}}
\DeclareMathOperator{\Range}{Range}

\allowdisplaybreaks

\ifpdf
\hypersetup{
  pdftitle={Toeplitz Infinite GMRES for Parameterized Linear Systems},
  pdfauthor={Weiguo Gao and Feiyang Jiang}
}
\fi

\begin{document}
\maketitle

\begin{abstract}
We develop Toeplitz infinite GMRES for solving large sparse analytic parameterized 
systems $A(s)x(s)=z$ at many parameter values. Our method exploits a block upper triangular 
Toeplitz structure in the companion Krylov sequence to construct the Arnoldi process and recover 
solution approximations without storing full Arnoldi vectors. We derive incremental recurrences 
requiring $\mathcal{O}(np^2+p^3)$ arithmetic and $\mathcal{O}(np+p^2)$ storage for $p$ Arnoldi steps, 
excluding factorization setup and assuming linear-cost coefficient actions and triangular solves. 
A dynamic generator-refreshing strategy addresses cancellation in this basic recurrence. Moreover, 
we show that the Arnoldi basis can be simply represented using one block row and a nilpotent matrix. 
For $A(s)$ admitting a split form, we further use this structure to develop an implicitly rebased method. 
A residual lower bound for square-summable Taylor coefficients explains the effect of scaling outside the unit disk. 
Theoretical analysis and numerical experiments illustrate the computational efficiency of the proposed methods.
\end{abstract}

\begin{keywords}
parameterized linear systems, Krylov methods, infinite Arnoldi,
Toeplitz matrices, companion linearization
\end{keywords}

\begin{MSCcodes}
65F10, 65F15, 65F50, 65H17
\end{MSCcodes}

\section{Introduction}
\label{sec:introduction}

We consider the repeated solution of
\[
  A(s)x(s)=z, \qquad s\in\mathbb{C},
\]
where $A(s)\in\mathbb{C}^{n\times n}$ is large, sparse, and analytic, and
$z\in\mathbb{C}^n$ is fixed.  Such systems arise in parameter-dependent partial
differential equations, transfer-function evaluation, and contour-integral
methods for nonlinear eigenvalue problems.  When many parameter values are
required, factoring each matrix $A(s)$ separately is often too expensive.

GMRES\cite{Saad1986} methods for shifted linear systems exploit Krylov shift
invariance to reuse a common search space \cite{Frommer1998}.
Infinite GMRES (infGMRES) \cite{Jarlebring2022} applies this
principle to analytic parameterized systems through a Taylor
companion linearization.  It constructs one Arnoldi factorization for all parameter
values and solves only a small parameter-dependent least-squares problem for
each $s$. Extensions include inexact preconditioning \cite{Correnty2024} and
a two-level orthogonal Arnoldi (TOAR) implementation for contour-integral
nonlinear eigensolvers \cite{Liu2025}.  Related compact representations
appear in infinite and tensor infinite Arnoldi methods \cite{Jarlebring2014,Jarlebring2017,Jarlebring2012,Mele2018,VanBeeumen2016}
and in TOAR methods \cite{Zhang2013,Lu2016} for polynomial
linearizations \cite{Kressner2014}, as well as in the compact rational Krylov (CORK)
framework \cite{VanBeeumen2015}.

Constructing this Arnoldi basis is still expensive.  Suppose that
coefficient-matrix products and triangular solves using one stored LU
factorization require $\mathcal{O}(n)$ arithmetic. For $p$ steps,
direct infGMRES then has
$\mathcal{O}(np^3)$ arithmetic and $\mathcal{O}(np^2)$ storage complexity. With TOAR techniques, TOAR infGMRES reduces storage 
complexity to $\mathcal{O}(np+p^3)$, but still needs $\mathcal{O}(np^3+p^4)$ arithmetic complexity due to the construction of full Arnoldi vectors. 

Our approach exploits the Toeplitz structure in the 
companion Krylov sequence and updates only the QR data needed for solution recovery. Fast QR methods for scalar and block Toeplitz matrices \cite{Bojanczyk1986,Chandrasekaran1998} motivate our approach. 
We first derive an incremental recurrence for the needed QR data, with only $\mathcal O(np^2+p^3)$ arithmetic and $\mathcal O(np+p^2)$ storage complexity. However, this approach 
will suffer from numerical cancellation. So we address this cancellation through dynamic generator-refreshing, even though it will add extra computational cost. The dynamic method further motivates us to use the first block row of the orthogonal basis of the Toeplitz matrix as the basis to construct the Arnoldi vectors. Based on this idea, we give a structured representation of the Arnoldi basis.
Under an additional split form assumption of $A(s)$, we use this structured representation to give the implicitly rebased method, which is both fast and stable in numerical experiments. 

The paper is organized as follows.
Section~\ref{sec:infGMRES} reviews infGMRES.
Section~\ref{sec:algorithms} uses the block Toeplitz structure to 
derive the incremental and dynamic algorithms.
Section~\ref{sec:implicit-rebasing} further explores the structure of the Arnoldi basis and develops the implicitly rebased method.
Section~\ref{sec:residual-floor} establishes the residual lower bound of infGMRES and
explains the effect of scaling.
Section~\ref{sec:experiments} applies our algorithms to several numerical experiments to 
illustrate their performance.
Conclusions are given in Section \ref{sec:conclusions}.

\section{Review of infGMRES}
\label{sec:infGMRES}
We first introduce the infGMRES framework \cite{Correnty2024}.
Throughout, assume that $z\ne0$, $A_0\coloneqq A(0)$ is nonsingular, and $A(s)$ is
analytic near the origin.  Its Taylor expansion is
\begin{equation}
  A(s)=\sum_{j=0}^{\infty}A_js^j,
  \qquad A_j\coloneqq \frac{A^{(j)}(0)}{j!},
  \label{eq:taylor}
\end{equation}
and its degree-$q$ truncation is $A_q(s)=\sum_{j=0}^q A_js^j$.  For any $s$
such that $A_q(s)$ is nonsingular, the solution $x_q(s)$ of $A_q(s)x_q(s)=z$ is equivalent to
the first $n$ elements of the solution of
\[
  (\mathcal L_0-s\mathcal L_1)v(s)=\operatorname{vec}_0(z),
  \qquad \operatorname{vec}_0(z)\coloneqq
  \begin{bmatrix}z^T&0&\cdots&0\end{bmatrix}^{T}
  \in\mathbb{C}^{(q+1)n},
\]
with
\[
\mathcal L_0\coloneqq
\begin{bmatrix}
 A_0&A_1&A_2&\cdots&A_q\\
&I&&&\\
&&I&&\\
&&&\ddots&\\
&&&&I
\end{bmatrix},
\quad
\mathcal L_1\coloneqq
\begin{bmatrix}
0&0&\cdots&0&0\\
I&0&&&\\
&I&\ddots&&\\
&&\ddots&0&\\
&&&I&0
\end{bmatrix}.
\]
Right preconditioning with $\mathcal L_0^{-1}$ gives
\begin{equation}
  (I-s\mathcal L_1\mathcal L_0^{-1})\widehat v(s)=\operatorname{vec}_0(z), \qquad \widehat v=\mathcal L_0v.
  \label{eq:shifted-system}
\end{equation}

Then we apply GMRES to solve \eqref{eq:shifted-system}. Let $p\leq q$ be the maximum number of iterations.  At
iteration $m\leq p$, the Arnoldi process applied to $\mathcal L_1\mathcal L_0^{-1}$ with $\operatorname{vec}_0(z)/\|z\|_2$ gives
\begin{equation}
  \mathcal L_1\mathcal L_0^{-1}\mathcal U_m=\mathcal U_{m+1}\underline H_m,
  \qquad \mathcal U_m^*\mathcal U_m=I_m.
  \label{eq:arnoldi}
\end{equation}
Next, for each $s$, we solve the reduced least-squares problem
\begin{equation}
  y_s=\operatorname*{argmin}_{y\in\mathbb{C}^m}
  \left\|\left(\Ibar{m}-s\underline H_m\right)y-\|z\|_2 e_1\right\|_2.
  \label{eq:small-ls}
\end{equation}
Then $\widetilde x(s) =[I_n, 0, \ldots, 0] \mathcal L_0^{-1}\mathcal U_my_s$ approximates $x_q(s)$. 

The name \emph{infinite GMRES} comes from extending this construction to an
infinite-dimensional companion linearization
\cite{Jarlebring2022}.  The starting vector $\operatorname{vec}_0(z)/\|z\|_2$
has only its first block nonzero.  Multiplication by $\mathcal L_1\mathcal L_0^{-1}$ increases the number
of potentially nonzero blocks by at most one, and Arnoldi orthogonalization
preserves the remaining zero blocks.  Thus, in exact arithmetic, the first
$p$ Arnoldi steps are identical for every $q$ greater than or equal to $p$, up to appending zero
blocks to the basis vectors.  The reduced least-squares problems and the
recovered approximations $\widetilde x(s)$ are consequently independent of
$q$ as well.  We may therefore formally let $q\to\infty$ and interpret these
iterates as GMRES applied to the infinite companion linearization of
the full Taylor series.  Each iteration still requires only finitely many
active blocks and Taylor coefficients, so the infinite matrix is never
formed.  Omitting
the zero tails introduces no truncation error into these iterates.

\section{Toeplitz infinite GMRES}
\label{sec:algorithms}

\subsection{Block Toeplitz motivation}
\label{sec:toeplitz-view}

The preconditioned companion matrix is
\[
\mathcal L_1\mathcal L_0^{-1}=
\begin{bmatrix}
0&0&0&\cdots&0\\
 A_0^{-1}&-A_0^{-1}A_1&-A_0^{-1}A_2&\cdots&-A_0^{-1}A_q\\
&I&0&&\\
&&\ddots&\ddots&\\
&&&I&0
\end{bmatrix}.
\]

Now we consider the structure of $\cK_m(\mathcal L_1\mathcal L_0^{-1},\operatorname{vec}_0(z))$. 
Define
\begin{equation}
 a_1\coloneq A_0^{-1}z,
 \qquad
 a_m\coloneq -A_0^{-1}\sum_{j=1}^{m-1}A_ja_{m-j},
 \quad \forall m\geq 2.
 \label{eq:a-recurrence}
\end{equation}
Direct multiplication gives
\[
\cK_m(\mathcal L_1\mathcal L_0^{-1},\operatorname{vec}_0(z))=\Range
\begin{bmatrix}
z&0\\
0&V_{m-1}\\
0&0
\end{bmatrix},
\]
where 
\[
V_m\coloneqq
\begin{bmatrix}
a_1&a_2&a_3&\cdots&a_m\\
&a_1&a_2&\cdots&a_{m-1}\\
&&\ddots&\ddots&\vdots\\
&&&a_1&a_2\\
&&&&a_1
\end{bmatrix}\in\mathbb{C}^{mn\times m},\quad \forall m.
\]

Because $a_1\ne0$, so $V_m$ has full column
rank.  We use its unique thin QR factorization $V_m=Q_mR_m$, with a positive
real diagonal in $R_m$.  These factorizations are nested because the first
$m-1$ columns of $V_m$ equal $[V_{m-1}^T,0]^T$.

This structure already reduces infGMRES to QR factorization of Toeplitz matrix $V_m$. 
Using the coordinates under $V_m$, we can prove that the reduced least-squares problem 
and approximate solution in infGMRES are
\begin{equation}
 \min_{y\in\mathbb{C}^m}
 \left\|\begin{bmatrix}\|z\|_2&0\\0&R_m\end{bmatrix}
 \left(\Ibar{m}-s\Jbar{m}\right)y-\|z\|_2 e_1\right\|_2,
 \quad
 \widetilde x(s)=[a_1,\ldots,a_m]y.
 \label{eq:direct-toeplitz-coordinates}
\end{equation}

To see this, we represent a vector in $\cK_m(\mathcal L_1\mathcal L_0^{-1},\operatorname{vec}_0(z))$ by
\[
 v_m(y)=
 \begin{bmatrix}z&0\\0&V_{m-1}\\0&0\end{bmatrix}y.
\]
Its lifted residual satisfies
\begin{align*}
 &(I-s\mathcal L_1\mathcal L_0^{-1})v_m(y)-\operatorname{vec}_0(z)\\
 &=\begin{bmatrix}z&0\\0&V_{m}\\0&0\end{bmatrix}\left(\Ibar{m}-s\Jbar{m}\right)y-\operatorname{vec}_0(z)\\
 &=\begin{bmatrix}z/\|z\|_2&0\\0&Q_{m}\\0&0\end{bmatrix}
 \left(\begin{bmatrix}\|z\|_2&0\\0&R_m\end{bmatrix}
 \left(\Ibar{m}-s\Jbar{m}\right)y-\|z\|_2 e_1\right).
\end{align*}
The columns of $\diag(z/\|z\|_2,Q_m)$ are orthonormal, so this factorization
preserves the residual norm and leads to the minimization in \eqref{eq:direct-toeplitz-coordinates}.
Moreover, the first block row of $\mathcal L_0^{-1}v_m(y)$ is
$[a_1,\ldots,a_m]y$, which gives the approximate solution. 

A direct QR of $V_m$ can be implemented in $\mathcal{O}(nm^2+m^3)$ arithmetic operations by modifying
the previous Toeplitz QR methods \cite{Bojanczyk1986,Chandrasekaran1998}. However, these methods are undesirable when $\{a_k\}_{k=1}^m$ becomes nearly aligned or increases exponentially,  
which can be caused by the recursive formula \eqref{eq:a-recurrence}.  In these cases, the QR factorization of $V_m$ will lose accuracy. Meanwhile, the ill-conditioned $R_m$ will also make minimizing \eqref{eq:direct-toeplitz-coordinates} unreliable.
We therefore turn to the original Arnoldi process of infGMRES and develop an incremental recurrence that updates only the QR data needed for solution recovery.
The following lemma shows the needed QR data.

\begin{lemma}[Arnoldi process from Toeplitz QR]
\label{lem:arnoldi-from-toeplitz}
For $m\ge2$, let $V_m=Q_mR_m$ be a thin QR factorization,
then the Hessenberg matrix in \eqref{eq:arnoldi} is
\begin{equation}
 \underline H_m=
 \begin{bmatrix}
 0&0_{1\times(m-1)}\\[2mm]
 \dfrac{\|a_1\|_2}{\|z\|_2}e_1&R_m(1\!:\!m, 2\!:\!m)R_{m-1}^{-1}
 \end{bmatrix}.
 \label{eq:H-from-S}
\end{equation}
Moreover, if $y_s$ minimizes \eqref{eq:small-ls}, then the approximate solution is
\begin{equation}
 \widetilde x(s)=
 \left[\frac{a_1}{\|z\|_2},
 [a_2,\ldots,a_m]R_{m-1}^{-1}\right]y_s.
 \label{eq:solution-from-S}
\end{equation}
\end{lemma}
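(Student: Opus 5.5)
The plan is to identify the Arnoldi basis explicitly and then read off $\underline H_m$ from the Toeplitz shift relation. Concretely, I would show that
\[
\mathcal U_m=\begin{bmatrix} z/\|z\|_2&0\\0&Q_{m-1}\\0&0\end{bmatrix}
\]
for $m\ge2$, where $Q_{m-1}$ comes from the thin QR factorization of $V_{m-1}$ with positive diagonal. By the Krylov structure displayed above, $\Range(\mathcal U_m)=\cK_m(\mathcal L_1\mathcal L_0^{-1},\operatorname{vec}_0(z))$. The columns are orthonormal because the two blocks live in disjoint block rows. The nestedness of the factorizations $V_{m-1}=Q_{m-1}R_{m-1}$ (first columns of $V_m$ equal $[V_{m-1}^T,0]^T$) makes this basis nested in $m$. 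Since the Arnoldi basis with positive subdiagonal is the unique nested orthonormal basis of the Krylov sequence, this candidate coincides with $\mathcal U_m$, up to the harmless unimodular normalization fixed by taking $R$ with positive diagonal. Here I assume no breakdown, which holds because $V_m$ has full column rank.

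Next I would compute $\mathcal L_1\mathcal L_0^{-1}\mathcal U_m$ blockwise.
\begin{itemize}
\item The first column maps to $\operatorname{vec}$ of $a_1/\|z\|_2$ in the second block row, namely $(1/\|z\|_2)$ times the first column of $V_m$. Since $V_m e_1=Q_m R_m e_1=\|a_1\|_2\,Q_m e_1$, the coefficient in the $Q_m$ part is $(\|a_1\|_2/\|z\|_2)e_1$.
\item For the remaining columns I use $\mathcal L_1\mathcal L_0^{-1}\begin{bmatrix}0\\V_{m-1}\\0\end{bmatrix}=\begin{bmatrix}0\\V_m(:,2\!:\!m)\\0\end{bmatrix}$. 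This is exactly the shift computation already used in the derivation of \eqref{eq:direct-toeplitz-coordinates}, where the $\Jbar{m}$ term arises: the recurrence \eqref{eq:a-recurrence} produces the top block, and the identity blocks shift the lower ones.
\item Substituting $V_{m-1}=Q_{m-1}R_{m-1}$ and $V_m(:,2\!:\!m)=Q_mR_m(1\!:\!m,2\!:\!m)$ gives $\mathcal L_1\mathcal L_0^{-1}[0;Q_{m-1};0]=[0;Q_m;0]\,R_m(1\!:\!m,2\!:\!m)R_{m-1}^{-1}$.
\end{itemize}
Since $\mathcal U_{m+1}=\diag(z/\|z\|_2,Q_m)$ padded with zeros, and nothing has a component along the first basis vector $z/\|z\|_2$, we read off \eqref{eq:H-from-S}, with a zero first row. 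As a sanity check, the resulting matrix is upper Hessenberg because $R_m(1\!:\!m,2\!:\!m)$ is upper Hessenberg and $R_{m-1}^{-1}$ is upper triangular.

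For the solution formula, I compute $[I_n,0,\ldots,0]\mathcal L_0^{-1}\mathcal U_m$. The first block row of $\mathcal L_0^{-1}v_m(y)$ was already identified as $[a_1,\ldots,a_m]y$ in coordinates $y$ for the generator $\diag(z,V_{m-1})$. The basis $\mathcal U_m$ equals that generator times $\diag(1/\|z\|_2,R_{m-1}^{-1})$. Hence the first block row of $\mathcal L_0^{-1}\mathcal U_m$ is $[a_1/\|z\|_2,\,[a_2,\ldots,a_m]R_{m-1}^{-1}]$, which is \eqref{eq:solution-from-S}.

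The main obstacle is the first step: rigorously identifying $\mathcal U_m$ with the candidate basis. The delicate points are the uniqueness of the Arnoldi basis, including its phase convention, and the index shift. The second Arnoldi vector corresponds to $Q_1$, so $\mathcal U_m$ involves $Q_{m-1}$ while $\mathcal U_{m+1}$ involves $Q_m$. I would handle this by induction on $m$, checking the positivity of the subdiagonal entries: they are $\|a_1\|_2/\|z\|_2$ and, for the lower part, the ratios $R_m(k+1,k+1)/R_{m-1}(k,k)>0$.
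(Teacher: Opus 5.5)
Your proposal is correct and takes essentially the same route as the paper. Both arguments identify $\mathcal U_m$ with the zero-padded $\diag(z/\|z\|_2,Q_{m-1})$, push it through the shift relation to obtain $[0;V_m]\diag(1/\|z\|_2,R_{m-1}^{-1})$, substitute $V_m=Q_mR_m$, and read off both $\underline H_m$ and the first block row of $\mathcal L_0^{-1}\mathcal U_m$. The only real difference is that you justify the identification of $\mathcal U_m$ through uniqueness of the Arnoldi basis with positive subdiagonal, checked via $R_m(k+1,k+1)/R_{m-1}(k,k)>0$, whereas the paper simply asserts it from the Krylov structure and the nested QR factorizations.
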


\begin{proof}
The Krylov structure and the nested QR factorizations give
\[
\mathcal U_m=
\begin{bmatrix}
\dfrac{z}{\|z\|_2}&0\\
0&Q_{m-1}\\
0&0
\end{bmatrix}=
\begin{bmatrix}
z&0\\
0&V_{m-1}\\
0&0
\end{bmatrix}
\begin{bmatrix}
  \dfrac{1}{\|z\|_2}&0\\
  0&R_{m-1}^{-1}
\end{bmatrix}
\]
By direct multiplication, we obtain
\begin{align*}
\mathcal L_1\mathcal L_0^{-1}\mathcal U_m
&=\begin{bmatrix}
  0_{n\times m}\\
  V_m
\end{bmatrix}
\begin{bmatrix}
  \dfrac{1}{\|z\|_2} & \\
  & R_{m-1}^{-1}
\end{bmatrix}\\
&=\begin{bmatrix}
  0_{n\times m}\\
  Q_m
\end{bmatrix}
\begin{bmatrix}
  \dfrac{\|a_1\|_2}{\|z\|_2}e_1 & R_m(:,2\!:\!m)R_{m-1}^{-1}
\end{bmatrix}\\
&=\mathcal U_{m+1}
\begin{bmatrix}
0&0\\
\dfrac{\|a_1\|_2}{\|z\|_2}e_1&R_m(1\!:\!m,2\!:\!m)R_{m-1}^{-1}
\end{bmatrix}.
\end{align*}
This proves
\eqref{eq:H-from-S}. Finally,
\begin{align*}
[I_n,0,\ldots,0]\mathcal L_0^{-1}\mathcal U_m
&=\left[\frac{a_1}{\|z\|_2},
-A_0^{-1}[A_1,\ldots,A_{m-1}]Q_{m-1}\right]\\
&=\left[\frac{a_1}{\|z\|_2},
-A_0^{-1}[A_1,\ldots,A_{m-1}]V_{m-1}R_{m-1}^{-1}\right]\\
&=\left[\frac{a_1}{\|z\|_2},
[a_2,\ldots,a_m]R_{m-1}^{-1}\right].
\end{align*}
Multiplying by $y_s$ proves \eqref{eq:solution-from-S}.
\end{proof}

Define
\[
 S_m\coloneq R_m(1\!:\!m,2\!:\!m)R_{m-1}^{-1}
 \in\mathbb{C}^{m\times(m-1)}.
\]
From Lemma~\ref{lem:arnoldi-from-toeplitz}, solution recovery
requires only $S_m$ and the $n$-dimensional recovery vectors
$[u_2,\ldots,u_m]\coloneqq [a_2,\ldots,a_m]R_{m-1}^{-1}$.  Together with
$a_1/\|z\|_2$, these vectors form the recovery basis in
\eqref{eq:solution-from-S}. Computing full Arnoldi vectors is unnecessary.

Let $q_{i,m} \coloneq Q_m((i-1)n+1\!:\!in, m)$, $i = 1,\ldots,m$, 
which is the $i$-th block row, $m$-th column of $Q_m$. 
In the proof of the above lemma, we have seen that
\[
[a_2,\ldots,a_{m}]R_{m-1}^{-1}
 =-A_0^{-1}[A_1,\ldots,A_{m-1}]Q_{m-1}.
\]
By taking the last column, we can derive that
\begin{equation}
u_m = -A_0^{-1}\sum_{i=1}^{m-1}A_iq_{i,m-1},
\label{eq:recovery-block-action}
\end{equation}
which can be an equivalent definition of $u_m$.

\subsection{Incremental recurrences}
\label{sec:recurrences}
We derive recurrences for $q_{1,m}$, $S_m$, and $u_m$. Let
\[g_1=\begin{bmatrix}a_1/\|a_1\|\\0_{m-1}\end{bmatrix}, \qquad
P_1=I-\frac{a_1a_1^*}{a_1^*a_1},
\]
where $P_1$ is a projection operator.
Assume that all quantities through step $m-1$ are available.

\paragraph{Updating $q_{1,m}$ and $S_m$}
By the block Toeplitz structure of $V_m$, the definition
$[u_2,\ldots,u_m]=[a_2,\ldots,a_m]R_{m-1}^{-1}$, and
$V_{m-1}=Q_{m-1}R_{m-1}$, we have
\begin{align}
 V_m
 &=\begin{bmatrix}
 a_1&[a_2,\ldots,a_m]\\
 0&V_{m-1}
 \end{bmatrix}\nonumber\\
 &=\begin{bmatrix}
 I_n&0\\0&Q_{m-1}
 \end{bmatrix}
 \begin{bmatrix}
 a_1&[u_2,\ldots,u_m]\\0&I_{m-1}
 \end{bmatrix}
 \begin{bmatrix}
 1&0\\0&R_{m-1}
 \end{bmatrix}.
 \label{eq:Vm-three-factor}
\end{align}
The leftmost factor has orthonormal columns, so only the middle matrix requires
factorization.  

We compute the thin QR factorization
\begin{equation}
\begin{bmatrix}P_1[u_2,\ldots,u_m]\\I_{m-1}\end{bmatrix}=\widehat Q_{m-1}\widehat R_{m-1},
 \label{eq:small-structured-qr}
\end{equation}
which can be updated by incremental Gram--Schmidt, 
at a cost of $\mathcal O(nm+m^2)$ arithmetic operations at step $m$.
Then the middle factor in
\eqref{eq:Vm-three-factor} satisfies
\[
 \begin{bmatrix}a_1&[u_2,\ldots,u_m]\\0&I_{m-1}\end{bmatrix}
 =
 [g_1,\widehat Q_{m-1}]
 \begin{bmatrix}
 \|a_1\|&a_1^*[u_2,\ldots,u_m]/\|a_1\|_2\\
 0&\widehat R_{m-1}
 \end{bmatrix}.
\]
Substituting this QR factorization into \eqref{eq:Vm-three-factor} yields
\begin{align}
 Q_m
 &=\begin{bmatrix}I_n&0\\0&Q_{m-1}\end{bmatrix}
 [g_1,\widehat Q_{m-1}],
 \label{eq:Qm-update}\\
 R_m
 &=\begin{bmatrix}
 \|a_1\|&\bigl(a_1^*[u_2,\ldots,u_m]/\|a_1\|\bigr)R_{m-1}\\
 0&\widehat R_{m-1}R_{m-1}
 \end{bmatrix}.
 \label{eq:Rm-update}
\end{align}
From the first $n$ rows of \eqref{eq:Qm-update}, we obtain
\begin{equation}
 q_{1,1}=a_1/\|a_1\|, \qquad
 [q_{1,2},\ldots,q_{1,m}]
 =\widehat Q_{m-1}(1\!:\!n,:).
 \label{eq:q-first-row}
\end{equation}
On the other hand, applying \eqref{eq:Rm-update} and $S_m=R_m(:,2\!:\!m)R_{m-1}^{-1}$ gives
\begin{equation}
 S_m=
 \begin{bmatrix}
 a_1^*[u_2,\ldots,u_m]/\|a_1\|\\
 \widehat R_{m-1}
 \end{bmatrix}.
 \label{eq:S-update}
\end{equation}
Thus, we have obtained $q_{1,m}$ and $S_m$ within $\mathcal{O}(nm+m^2)$ arithmetic operations at step $m$.

\paragraph{Updating the recovery vector $u_{m+1}$}
Notice that
\[\begin{bmatrix}
    a_2 &\cdots&a_{m+1}\\
    a_1&\cdots &a_m
\end{bmatrix}=\begin{bmatrix}
    u_2 &\cdots&u_{m+1}\\
    q_{1,1}&\cdots &q_{1,m}
\end{bmatrix}R_m.\]
Let $t_m = R_me_m\in\mathbb C^m$, then
\begin{align*}
    a_{m+1}&=[u_2,\ldots,u_{m+1}]t_m,\\
    a_m&=[q_{1,1},\ldots,q_{1,m}]t_m.
\end{align*}
If we can compute $t_m$, then we can compute $u_{m+1}$ from $[u_2,\ldots,u_{m+1}]t_m = a_{m+1}$, with $\mathcal O(nm)$ arithmetic cost.
By
\[
S_mt_{m-1} = R_m(:, 2\!:\!m)R_{m-1}^{-1}R_{m-1}e_{m-1} = R_m(:, 2\!:\!m)e_{m-1} = t_m,
\]
we can get the recurrence for $t_m$: 
\[
 t_1=\|a_1\|, \qquad t_m=S_mt_{m-1}, \quad m\ge2.
\]

Thus, we have derived incremental recurrences for $q_{1,m}$, $S_m$, and $u_m$ 
that require only $\mathcal{O}(nm+m^2)$ arithmetic operations at step $m$.  
The resulting algorithm is summarized in Algorithm~\ref{alg:toeplitz-infgmres}.

\begin{algorithm}[htbp]
\caption{Toeplitz infinite GMRES (Toeplitz infGMRES)}
\label{alg:toeplitz-infgmres}
\begin{algorithmic}[1]
\REQUIRE Taylor coefficients $A_0,\ldots,A_q$, maximum Arnoldi length
$2\leq p\leq q$, right-hand side $z$, parameter values $s_1,\ldots,s_N$
\ENSURE Approximations $\widetilde x(s_j)$,
$j=1,\ldots,N$
\STATE Compute and store one LU factorization of $A_0$
\STATE Solve $A_0a_1=z$ with the stored factors; set $t_1=\|a_1\|$; apply
$P_1v=v-a_1(a_1^*v)/\|a_1\|_2^2$ implicitly
\FOR{$m=2,\ldots,p$}
  \STATE Solve $A_0a_m=-\sum_{j=1}^{m-1}A_ja_{m-j}$ using the
  stored factors
  \IF{$m=2$}
    \STATE $u_2=a_2/t_1$
  \ELSE
    \STATE $u_m=\bigl(a_m-[u_2,\ldots,u_{m-1}]
    t_{m-1}(1\!:\!m-2)\bigr)/t_{m-1}(m-1)$
  \ENDIF
  \STATE Update
  $\begin{bmatrix}P_1[u_2,\ldots,u_m]\\I_{m-1}\end{bmatrix}
  =\widehat Q_{m-1}\widehat R_{m-1}$ by incremental Gram--Schmidt
  \STATE $S_m=
  \begin{bmatrix}a_1^*[u_2,\ldots,u_m]/\|a_1\|\\
  \widehat R_{m-1}\end{bmatrix}$
  \STATE $t_m=S_mt_{m-1}$
\ENDFOR
\STATE Form $\underline H_p$ from \eqref{eq:H-from-S}
\FOR{$j=1,\ldots,N$}
  \STATE Solve \eqref{eq:small-ls} for $y_{s_j}$
  \STATE $\widetilde x(s_j)=[a_1/\|z\|_2,u_2,\ldots,u_p]y_{s_j}$
\ENDFOR
\end{algorithmic}
\end{algorithm}

Let $F_{\rm LU}(n)$ denote the cost of factoring $A_0$ and $S_{\rm LU}(n)$
the cost of one pair of triangular solves with the stored factors.
If $\operatorname{nnz}(A_j)=\mathcal{O}(n)$, 
the construction of $\{a_m\}_{m=1}^p$ costs $F_{\rm LU}(n)+pS_{\rm LU}(n)+\mathcal{O}(np^2)$
arithmetic operations. The overall arithmetic complexity of Algorithm~\ref{alg:toeplitz-infgmres} is
\[
 F_{\rm LU}(n)+pS_{\rm LU}(n)+\mathcal{O}(np^2+p^3).
\]
Storing $\{a_m\}_{m=1}^p$, $\{u_m\}_{m=2}^p$, $\widehat Q_{p-1}$, $\widehat R_{p-1}$ and $S_p$ requires
$\mathcal{O}(np+p^2)$.
In the complexity bounds below, we omit the common arithmetic cost
$F_{\rm LU}(n)+pS_{\rm LU}(n)$ and the storage of LU-factors.

\subsection{Cancellation in the recovery recurrence}
\label{sec:stability}

Algorithm~\ref{alg:toeplitz-infgmres} can lose stability. The main unstable step is computing $u_m$:
\begin{equation}
\label{eq:u-update}
u_m = \frac{a_m-[u_2,\ldots,u_{m-1}]t_{m-1}(1:m-2)}{t_{m-1}(m-1)}.
\end{equation}
Let
\[
 c_m=[u_2,\ldots,u_{m-1}]t_{m-1}(1\!:\!m-2).
\]
This step subtracts $c_m$ from $a_m$ and divides by
$t_{m-1}(m-1)$.  A standard cancellation indicator is
\[
 \kappa_m^{\rm canc}=
 \frac{\|a_m\|_2+\|c_m\|_2}{\|a_m-c_m\|_2}.
\]
When $\{a_m\}_{m=1}^p$ becomes nearly linearly dependent, $\kappa_m^{\rm canc}$ can be large and
measures the potential amplification caused by the subtraction.  The
unmodified Toeplitz method then propagates the resulting error.

Assume that we have just finished step $m$ of Toeplitz infGMRES. 
Now we wish to determine whether using equation~\eqref{eq:u-update} 
to update $u_{m+1}$ at step $m+1$ will cause severe cancellation. This means that we need 
an indicator to estimate $\kappa_{m+1}^{\rm canc}$, with the known quantities at step $m$.

To estimate this cancellation, we use the computable indicator
\begin{equation}
 \eta_m=
 \begin{cases}
 \dfrac{2\|G_mS_m t_m(1\!:\!m-1)\|_2}
 {|t_m(m)|\,\|u_m\|_2},&|t_m(m)|\,\|u_m\|_2>0,\\[2mm]
 +\infty,&|t_m(m)|\,\|u_m\|_2=0,
 \end{cases}
\label{eq:judge}
\end{equation}
where $G_m\coloneqq [q_{1,1},\ldots,q_{1,m}]$ is the first block row of $Q_m$.

By the definitions of $G_m$ and $S_m$, we have
\[
 G_mS_m=[q_{1,1},\ldots,q_{1,m}]R_m(:,2:m)R_{m-1}^{-1}=[a_2,\ldots,a_m]R_{m-1}^{-1}=[u_2,\ldots,u_m].
\]
Thus, with $c_{m+1}=[u_2,\ldots,u_m]t_m(1\!:\!m-1)$, the numerator of $\eta_m$ is $2\|c_{m+1}\|_2$.
So
\[
 \frac{\eta_m}{\kappa_{m+1}^{\rm canc}}
 =\frac{2\|c_{m+1}\|_2}{\|a_{m+1}\|_2+\|c_{m+1}\|_2}
  \frac{\|u_{m+1}\|_2}{\|u_m\|_2}.
\]
By the reverse triangle inequality, the first ratio on the right differs from
one by at most $1/\kappa_{m+1}^{\rm canc}$.  Hence when $\kappa_{m+1}^{\rm canc}$ is large, 
$\eta_m$ tracks prospective
cancellation up to $\|u_{m+1}\|_2/\|u_m\|_2$ and is of the same order when
the norms of consecutive recovery vectors are comparable. 

\subsection{Dynamic stabilization}
\label{sec:dynamic}
To weaken the cancellation, we recover selected Arnoldi
vectors when the cancellation is detected.
Here we consider how to construct selected Arnoldi vectors.

Because of the Toeplitz structure of $V_m$, there exist vectors $e^{i,m}\in\mathbb{C}^m$ satisfying
\begin{equation}
 q_{i,m}=G_me^{i,m}, \quad 1\le i\le m.
 \label{eq:e-representation}
\end{equation}
Here we can always assume $e^{i,m}$ is padded with zeros to arbitrarily large length.

\begin{lemma}[Block recovery]
\label{lem:block-recovery}
The recurrence of $\{e^{i,m}\}$ is
\begin{align}
 e^{1,m}&=e_m,\label{eq:e-first}\\
e^{i,m}&=\frac{e^{i-1,m-1}-
 \sum_{k=i}^{m-1}S_m(k,m-1)e^{i,k}}
 {S_m(m,m-1)},\quad 2\le i\le m.\label{eq:e-recurrence}
\end{align}
\end{lemma}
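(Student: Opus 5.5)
The recurrence should come from reading off block rows of one matrix identity that follows from the Toeplitz structure of $V_m$. The first formula \eqref{eq:e-first} is immediate: $q_{1,m}=G_me_m$ by the definition $G_m=[q_{1,1},\ldots,q_{1,m}]$. For the second, I will use the shift structure of $V_m$. Deleting the first block row of $V_m$ and its first column leaves exactly $V_{m-1}$, since the $(i,j)$ block of $V_m$ is $a_{j-i+1}$. So block row $i\ge2$ of $V_m(:,2\!:\!m)$ equals block row $i-1$ of $V_{m-1}$.

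Concretely, I will write $V_m(:,2\!:\!m)=Q_mR_m(:,2\!:\!m)$. By the definition $S_m=R_m(1\!:\!m,2\!:\!m)R_{m-1}^{-1}$, this gives
\[
V_m(:,2\!:\!m)R_{m-1}^{-1}=Q_mS_m .
\]
For block row $i\ge2$, the left side equals block row $i-1$ of $V_{m-1}R_{m-1}^{-1}=Q_{m-1}$. This yields the identity
\[
[q_{i-1,1},\ldots,q_{i-1,m-1}]=[q_{i,1},\ldots,q_{i,m}]S_m .
\]
Taking the last column $m-1$ gives
\[
q_{i-1,m-1}=\sum_{k=1}^{m}S_m(k,m-1)\,q_{i,k}.
\]

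Next I will prune this sum using the structure of $Q_m$. Since $V_m$ is block upper triangular and $R_m$ is upper triangular, $Q_m=V_mR_m^{-1}$ is block upper triangular. Hence $q_{i,k}=0$ for $k<i$, and the sum runs over $k=i,\ldots,m$. Isolating the $k=m$ term gives
\[
q_{i,m}=\frac{q_{i-1,m-1}-\sum_{k=i}^{m-1}S_m(k,m-1)q_{i,k}}{S_m(m,m-1)}.
\]
The denominator is nonzero because $S_m(m,m-1)=R_m(m,m)/R_{m-1}(m-1,m-1)>0$, as $R_m$ is upper triangular with positive diagonal.

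Finally I will substitute the representations $q_{j,k}=G_ke^{j,k}$ and use the zero-padding convention. Because $G_k$ consists of the first $k$ columns of $G_m$ (the QR factorizations are nested), $G_ke^{j,k}=G_me^{j,k}$. So every term is $G_m$ times a padded vector, and defining $e^{i,m}$ by \eqref{eq:e-recurrence} gives $q_{i,m}=G_me^{i,m}$. The argument is an induction on $m$, with the inner quantities $e^{i,k}$ for $k<m$ and $e^{i-1,m-1}$ available from earlier steps. This also proves that vectors satisfying \eqref{eq:e-representation} exist.

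The main point needing care is the index bookkeeping in the shift identity: row $i$ of the shifted $V_m$ must match row $i-1$ of $V_{m-1}$ exactly, including the first block row, where $i-1=1$ gives $e^{1,m-1}=e_{m-1}$. The triangularity argument that justifies dropping the terms with $k<i$ also needs checking.
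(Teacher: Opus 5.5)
Your proposal is correct and follows essentially the same route as the paper. You derive the shift identity $Q_mS_m=V_m(:,2\!:\!m)R_{m-1}^{-1}$, whose block rows $2,\ldots,m$ form $Q_{m-1}$, and read off block row $i$ and column $m-1$; you then drop the terms with $k<i$ by block upper triangularity of $Q_m$ and substitute $q_{j,k}=G_me^{j,k}$ via $G_k=G_m(:,1\!:\!k)$ and zero-padding. Your explicit check that $S_m(m,m-1)=R_m(m,m)/R_{m-1}(m-1,m-1)>0$, and your framing of the argument as an induction on $m$ that also establishes existence of the $e^{i,m}$, are small but useful additions that the paper leaves implicit.
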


\begin{proof}
The relation $e^{1,m}=e_m$ follows directly from the identity $q_{1,m}=G_me_m$. Moreover,
\begin{equation}
 Q_mS_m
 =Q_mR_m(:,2\!:\!m)R_{m-1}^{-1}
 =\begin{bmatrix}[u_2,\ldots,u_m]\\Q_{m-1}\end{bmatrix}.
 \label{eq:Q-S-shift}
\end{equation}
For $2\leq i\leq m$, taking block row $i$ and column $m-1$ of
\eqref{eq:Q-S-shift} and using $q_{i,k}=0$ for $k<i$ gives
\[
 q_{i-1,m-1}=\sum_{k=i}^{m}S_m(k,m-1)q_{i,k}.
\]
Solving for $q_{i,m}$,
using $G_k=G_m(:,1\!:\!k)$, and zero-padding yields the recurrence.
\end{proof}

At a refresh step $m$, we reconstruct $q_{1,m},\ldots,q_{m,m}$ from
\eqref{eq:e-representation} and replace the generator sequence $\{a_k\}_{k=1}^m$ by
\[
 a_i^{\rm new}=q_{m-i+1,m}, \quad i=1,\ldots,m.
\]
Then we continue the Arnoldi process using $\{a_i^{\rm new}\}$ and $t_m^{\rm new}=e_m$.

Suppose that at some step $k\ge m$, we have
\[
 \begin{bmatrix}
 a_k^{\rm new}\\ \vdots\\ a_1^{\rm new}
 \end{bmatrix}
 =Q_kt_k^{\rm new},
\]
while by definition this equation holds at the case $k=m$.

By the recurrence formula of $\{a_i^{\rm new}\}$ and \eqref{eq:recovery-block-action},
\begin{align*}
 a_{k+1}^{\rm new}
 &=-A_0^{-1}\sum_{i=1}^{k}A_i a_{k+1-i}^{\rm new}\\
 &=-A_0^{-1}[A_1,\ldots,A_k]Q_kt_k^{\rm new}
 =[u_2,\ldots,u_{k+1}]t_k^{\rm new}.
\end{align*}
Combining this with \eqref{eq:Q-S-shift} gives
\[
 \begin{bmatrix}
 a_{k+1}^{\rm new}\\ a_k^{\rm new}\\ \vdots\\ a_1^{\rm new}
 \end{bmatrix}
 =
 \begin{bmatrix}
 [u_2,\ldots,u_{k+1}]\\ Q_k
 \end{bmatrix}t_k^{\rm new}
 =Q_{k+1}S_{k+1}t_k^{\rm new}.
\]
Thus, $t_{k+1}^{\rm new}=S_{k+1}t_k^{\rm new}$ preserves the relationship
between the generators and their coordinates, allowing the basic
recurrence to continue unchanged.

With these techniques and the cancellation indicator $\eta_m$ defined in \eqref{eq:judge}, we propose 
Dynamic Toeplitz infGMRES (Dynamic TiGMRES); see Algorithm~\ref{alg:dynamic}. 

\begin{algorithm}[htbp]
\caption{Dynamic Toeplitz infGMRES (Dynamic TiGMRES)}
\label{alg:dynamic}
\begin{algorithmic}[1]
\REQUIRE Inputs of Algorithm~\ref{alg:toeplitz-infgmres}, threshold
$\tau\geq0$
\STATE Initialize as in Algorithm~\ref{alg:toeplitz-infgmres}; set $a_1^{(0)}=a_1$ 
(because $a_1$ will be updated during the iteration); set $G_1=a_1/\|a_1\|$, $e^{1,1}=1$
\FOR{$m=2,\ldots,p$}
  \STATE Compute $a_m$ from the current generators by \eqref{eq:a-recurrence}
  \STATE Recover $u_m$ by \eqref{eq:u-update}, with $u_2=a_2/\|a_1\|$
  \STATE Update the QR factorization \eqref{eq:small-structured-qr} by
  incremental Gram--Schmidt; form $G_m$ and $S_m$ from
  \eqref{eq:q-first-row}--\eqref{eq:S-update}, using the initial $a_1^{(0)}$
  \STATE Set $t_m=S_mt_{m-1}$
  \STATE Compute $e^{i,m}$, $i=1,\ldots,m$, by
  \eqref{eq:e-first}--\eqref{eq:e-recurrence}, and $\eta_m$ by \eqref{eq:judge}
  \IF{$m\ne p$ and $\bigl(m\le\lfloor p/2\rfloor\ \text{or}\ \eta_m\ge\tau\bigr)$}
    \STATE Set $a_i= G_me^{m-i+1,m}$ for $i=1,\ldots,m$, and $t_m= e_m$
  \ENDIF
\ENDFOR
\STATE Form $\underline H_p$ by \eqref{eq:H-from-S}, using the initial $a_1^{(0)}$
\STATE For each $s_j$, solve \eqref{eq:small-ls} for $y_{s_j}$ and return
$\widetilde x(s_j)=[a_1^{(0)}/\|z\|,u_2,\ldots,u_p]y_{s_j}$
\end{algorithmic}
\end{algorithm}

Refreshing at $m=m_1,\ldots,m_r$ Arnoldi steps needs about $2n\sum_{j=1}^r m_j^2$ extra operations, 
because of the effort required for computing the vectors $\{q_{i,m_k}\}, 1\le i \le m_k, 1\le k \le r$.
Algorithm~\ref{alg:dynamic} refreshes during the first half of the iterations and
when $\eta_m\ge\tau$ thereafter, using the indicator and QR update above. The reason for 
refreshing every step during the first half is that precision of the early steps will deeply 
affect the latter steps. And the cost is $2n\sum_{j=2}^{\lfloor p/2\rfloor} j^2\approx \frac{1}{12}np^3$, 
which is only about $\frac{1}{8}$ of the time that refreshing
all steps takes. In practice, a suitably chosen threshold $\tau$ 
can balance stability and efficiency, as we will see in Section~\ref{sec:experiments}.

In the extreme case $\tau=0$, 
Dynamic TiGMRES will refresh at every step, 
leading to its $\frac{2}{3}np^3+\mathcal O(np^2+p^4)$ arithmetic and
$\mathcal O(np+p^3)$ storage complexity. We call this variant fully refreshed TiGMRES. For TOAR infGMRES, two-level orthogonalization 
costs $\mathcal O(nm+m^3)$ at step $m$
\cite{Liu2025}.  Its $m$ dense $n$-by-$m$ basis products add
$2nm^2$ flops per step. The total arithmetic cost over $p$ steps is therefore $\frac{2}{3}np^3+\mathcal O(np^2+p^4)$. 
Thus fully refreshed TiGMRES and TOAR infGMRES have identical arithmetic and storage complexity. The main difference between 
the two algorithms is: At step $m$, fully refreshed TiGMRES uses $G_m$ as the basis to construct the full Arnoldi vectors, while TOAR infGMRES 
uses the orthonormal basis of the column space of $G_m$ to construct them. 
We will see the benefit of directly using $G_m$ as the basis in Section~\ref{sec:implicit-rebasing}, 
where we can give a simple structured representation for $Q_m$ in terms of $G_m$.

Table~\ref{tab:theoretical-complexity} compares basis construction of each algorithm over
$p\leq q$ steps without assuming the separated representation in
Hypothesis~\ref{hyp:separated-functions}. 
Solving the reduced problems and forming $N$ approximations adds
$\mathcal O(N(p^2+np))$ arithmetic for every method.

\begin{table}[H]
\caption{Basis-construction complexities without Hypothesis~\ref{hyp:separated-functions}.
Here $m_1,\ldots,m_r$ denote the refresh steps of Dynamic TiGMRES, 
and we assume $m_i = i+1, \forall i=1,\ldots,\lfloor p/2\rfloor-1$.}
\label{tab:theoretical-complexity}
\centering
\small
\begin{tabular}{lll}
\toprule
Method & Arithmetic complexity & Storage complexity\\
\midrule
TOAR infGMRES
 & $\frac{2}{3}np^3+\mathcal O(np^2+p^4)$
 & $\mathcal O(np+p^3)$\\
Toeplitz infGMRES
 & $\mathcal O(np^2+p^3)$
 & $\mathcal O(np+p^2)$\\
Dynamic TiGMRES
 & $\frac{1}{12}np^3+2n\sum_{i\ge\lfloor p/2\rfloor}^r m_i^2+\mathcal O(np^2+p^4)$
 & $\mathcal O(np+p^3)$\\
fully refreshed TiGMRES
 & $\frac{2}{3}np^3+\mathcal O(np^2+p^4)$
 & $\mathcal O(np+p^3)$\\
\bottomrule
\end{tabular}
\end{table}

\section{Implicitly rebased Toeplitz infGMRES}
\label{sec:implicit-rebasing}
In this section, we present implicitly rebased Toeplitz infGMRES
(IR-TiGMRES) for $A(s)$ with a split form.  

\subsection{A structured representation of the Arnoldi basis}
\label{sec:ir-compact-basis}
Dynamic TiGMRES in Section~\ref{sec:dynamic} uses $G_m$ as the basis to construct 
every Arnoldi vector. This motivates us to represent $Q_m$ in terms of $G_m$.

Let $J_m\in\mathbb C^{m\times m}$ have ones on the first
superdiagonal and zeros elsewhere. Then the Toeplitz structure of $V_m$ gives
\[
(J_m\otimes I_n)V_m=V_mJ_m.
\]
Using the thin QR factorization $V_m = Q_mR_m$, we can derive that 
\[
(J_m\otimes I_n)Q_m=Q_m(R_mJ_mR_m^{-1}).
\]
Let $K_m \coloneqq R_mJ_mR_m^{-1}$, we can prove the following theorem.

\begin{theorem}[Structured representation for $Q_m$]
For every
$1\leq m\leq p$,
\[
  G_m^*G_m+K_m^*K_m=I_m,\qquad
 Q_m=\begin{bmatrix}
  G_m\\G_mK_m\\ \vdots\\G_mK_m^{m-1}
 \end{bmatrix},
\]
\end{theorem}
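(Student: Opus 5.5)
We will derive both claims from the intertwining relation $(J_m\otimes I_n)Q_m=Q_mK_m$ stated just before the theorem. First we record why that relation holds. The identity $(J_m\otimes I_n)V_m=V_mJ_m$ is the Toeplitz structure: shifting block rows up by one equals shifting columns right by one, since block $(i,j)$ of $V_m$ is $a_{j-i+1}$, or zero when $j<i$. Substituting $V_m=Q_mR_m$ and multiplying on the right by $R_m^{-1}$ gives $(J_m\otimes I_n)Q_m=Q_mR_mJ_mR_m^{-1}=Q_mK_m$. The matrix $R_m$ is invertible because $V_m$ has full column rank.

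For the block representation, we read the intertwining relation block row by block row. Block row $i$ of $(J_m\otimes I_n)Q_m$ is block row $i+1$ of $Q_m$ for $i<m$, and is zero for $i=m$. Writing $Q^{(i)}$ for the $i$-th block row of $Q_m$, this gives $Q^{(i+1)}=Q^{(i)}K_m$ for $1\le i\le m-1$. Since $Q^{(1)}=G_m$ by definition, induction yields $Q^{(i)}=G_mK_m^{i-1}$, which is the stacked form in the theorem. The last block row gives the extra identity $G_mK_m^{m}=0$. This is consistent with $K_m^m=R_mJ_m^mR_m^{-1}=0$, since $K_m$ is similar to the nilpotent $J_m$.

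For the Gram identity, we use $Q_m^*Q_m=I_m$ and split off the first block row:
\[
I_m=Q_m^*Q_m=G_m^*G_m+\sum_{i=2}^{m}\bigl(Q^{(i)}\bigr)^*Q^{(i)}.
\]
The remaining sum equals $\bigl((J_m\otimes I_n)Q_m\bigr)^*\bigl((J_m\otimes I_n)Q_m\bigr)$, because applying $J_m\otimes I_n$ drops the first block row and appends a zero block. By the intertwining relation this equals $(Q_mK_m)^*(Q_mK_m)=K_m^*Q_m^*Q_mK_m=K_m^*K_m$. Hence $G_m^*G_m+K_m^*K_m=I_m$.

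The argument has no real obstacle. The only point needing care is the bookkeeping of the shift $J_m\otimes I_n$: it must move block rows up, not down, so that it matches the column shift $V_mJ_m$. One can check this directly against the displayed upper-triangular Toeplitz form of $V_m$. The case $m=1$ is trivial: $J_1=0$ gives $K_1=0$, and then $Q_1=G_1$ with $G_1^*G_1=1$.
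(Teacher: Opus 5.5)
Your proof is correct and follows the paper's argument. The paper also reads $(J_m\otimes I_n)Q_m=Q_mK_m$ block row by block row to get $Q_m^{(i)}=G_mK_m^{i-1}$. It obtains the Gram identity from $K_m^*K_m=Q_m^*(J_m^*J_m\otimes I_n)Q_m$ with $J_m^*J_m=I_m-e_1e_1^*$, which is your computation of dropping the first block row, written in matrix form.
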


\begin{proof}
Let $Q_m^{(i)}$ denote the $i$th $n\times m$ block row of $Q_m$,
so that $Q_m^{(1)}=G_m$. The identity
$(J_m\otimes I_n)Q_m=Q_mK_m$ gives
\[
  Q_m^{(i+1)}=Q_m^{(i)}K_m,
  \qquad i=1,\ldots,m-1.
\]
Thus $Q_m^{(i)}=G_mK_m^{i-1}$, proving the stated representation of $Q_m$.

Since $Q_m^*Q_m=I_m$ and $J_m^*J_m=I_m-e_1e_1^*$, we have
\begin{align*}
  K_m^*K_m
  =(Q_mK_m)^*(Q_mK_m)
  &=\big((J_m\otimes I_n)Q_m\big)^*\big((J_m\otimes I_n)Q_m\big)\\
  &=Q_m^*(J_m^*J_m\otimes I_n)Q_m
  =I_m-G_m^*G_m.
\end{align*}
This proves the remaining identity.
\end{proof}

For computation, $G_m$ and $K_m$ can be obtained directly from the
QR factor $\widehat Q_{m-1}$ in \eqref{eq:small-structured-qr}:
\begin{align}
  G_m&=[q_{1,1},\widehat Q_{m-1}(1\!:\!n,:)],\label {eq:G_m}\\
  K_m&=
  \begin{bmatrix}
    0_{(m-1)\times1}&\widehat Q_{m-1}(n+1\!:\!n+m-1,:)\\
    0&0_{1\times(m-1)}
  \end{bmatrix}.
  \label {eq:K_m}
\end{align}
The first formula follows from \eqref{eq:Qm-update}.
Identity \eqref{eq:Rm-update} and direct multiplication yield
\[
  K_m=R_mJ_mR_m^{-1}
  =\begin{bmatrix}R_{m-1}\\0_{1\times(m-1)}\end{bmatrix}
   \begin{bmatrix}
     0_{(m-1)\times1}&R_{m-1}^{-1}\widehat R_{m-1}^{-1}
   \end{bmatrix}
  =\begin{bmatrix}
     0_{(m-1)\times1}&\widehat R_{m-1}^{-1}\\
     0&0_{1\times(m-1)}
   \end{bmatrix}.
\]
The lower block of \eqref{eq:small-structured-qr} gives
$\widehat Q_{m-1}(n+1\!:\!n+m-1,:)\widehat R_{m-1}=I_{m-1}$,
which establishes the stated formula for $K_m$.

\subsection{Computing the recovery vector}
\label{sec:ir-recovery-vector}

\begin{hypothesis}
\label{hyp:separated-functions}
The matrix-valued function $A(s)$ is available in the split form
\begin{equation}
 A(s)=\sum_{\ell=0}^{d-1}
 f_\ell(s)M_\ell,
 \label{eq:separated-functions}
\end{equation}
where $d$ is fixed and small, $f_0(s)\equiv 1$ and each $f_\ell$ is analytic at
$s=0$, and $\operatorname{nnz}(M_\ell)=\mathcal{O}(n)$.
\end{hypothesis}

Write $f_\ell(s)=\sum_{j\geq0}f_{\ell,j}s^j$.  Comparing
\eqref{eq:separated-functions} with the Taylor expansion \eqref{eq:taylor}
gives
\begin{equation}
 A_j=\sum_{\ell=1}^{d-1}f_{\ell,j}M_\ell,
 \quad j\geq1.
 \label{eq:separated-coefficients}
\end{equation}

The split form of $A(s)$ now allows us to express the recovery vector $u_{m+1}$ simply
using $G_m$ and matrix functions of $K_m$.

For each $\ell$, define the analytic divided difference of $f_\ell$ at the origin by
\[
 \phi_\ell(z)=
 \begin{cases}
  \bigl(f_\ell(z)-f_\ell(0)\bigr)/z,&z\ne0,\\
  f_\ell'(0),&z=0.
 \end{cases}
\]

\begin{theorem}[Recovery vector from the structured representation]
\label{thm:implicit-companion-action}
Under Hypothesis~\ref{hyp:separated-functions}, the recovery vector $u_{m+1}$ can be expressed as
\begin{equation}
 u_{m+1}=-A_0^{-1}\sum_{\ell=1}^{d-1}
 M_\ell G_m\phi_\ell(K_m)e_m.
 \label{eq:implicit-companion-action}
\end{equation}
\end{theorem}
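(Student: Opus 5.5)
Start from the block-action formula \eqref{eq:recovery-block-action} at index $m+1$:
\[
 u_{m+1}=-A_0^{-1}\sum_{i=1}^{m}A_iq_{i,m},
\]
where $q_{i,m}$ is the $i$th block row of the last column of $Q_m$. By the structured representation theorem, $q_{i,m}=G_mK_m^{i-1}e_m$. Substituting the split coefficients \eqref{eq:separated-coefficients} and exchanging the finite sums gives
\[
 u_{m+1}=-A_0^{-1}\sum_{\ell=1}^{d-1}M_\ell G_m\Bigl(\sum_{i=1}^{m}f_{\ell,i}K_m^{i-1}\Bigr)e_m .
\]

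It remains to identify the inner matrix polynomial with $\phi_\ell(K_m)$. The key fact is that $K_m=R_mJ_mR_m^{-1}$ is similar to the nilpotent shift $J_m$, so $K_m^m=0$. The divided difference has Taylor series $\phi_\ell(z)=\sum_{i\ge1}f_{\ell,i}z^{i-1}$, analytic at $0$. Hence $\phi_\ell(K_m)$ is well defined through the power series (or the holomorphic functional calculus, since the spectrum of $K_m$ is $\{0\}$), and the series terminates:
\[
 \phi_\ell(K_m)=\sum_{i=1}^{m}f_{\ell,i}K_m^{i-1}.
\]
Inserting this into the previous display yields \eqref{eq:implicit-companion-action}.

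The only points needing care are modest. First, the index range in \eqref{eq:recovery-block-action} must be shifted correctly to $m+1$: the sum runs over $i=1,\ldots,m$ with $q_{i,m}$, which matches $K_m^{i-1}$ for $i\le m$. Second, $A_0$ does not enter the sum, since $f_0\equiv1$ contributes only to $A_0$. This is why only $\ell\ge1$ appears, and it is consistent with \eqref{eq:separated-coefficients}. Third, we should justify that $\phi_\ell(K_m)$ agrees with the truncated polynomial. This follows from nilpotency, since all terms with $i-1\ge m$ vanish. I expect this nilpotency and functional-calculus step to be the main, though light, obstacle; it is settled by the similarity $K_m\sim J_m$.
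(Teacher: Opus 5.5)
Your proposal is correct and follows the paper's proof step for step. You shift \eqref{eq:recovery-block-action} to index $m+1$, insert $q_{i,m}=G_mK_m^{i-1}e_m$ and the split coefficients, and interchange the finite sums; you then identify the degree-$(m-1)$ Taylor polynomial with $\phi_\ell(K_m)$ using $K_m^m=0$, which your similarity $K_m=R_mJ_mR_m^{-1}$ justifies, just as the paper does.
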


\begin{proof}
The $i$th block of the last column of $Q_m$ is
$q_{i,m}=G_mK_m^{i-1}e_m$.  Equation~\eqref{eq:recovery-block-action} gives
\[
 u_{m+1}=-A_0^{-1}\sum_{i=1}^{m}A_iq_{i,m}.
\]
Substitution of \eqref{eq:separated-coefficients}, followed by an interchange
of the two finite sums, gives
\[
 -A_0^{-1}\sum_{\ell=1}^{d-1}M_\ell G_m
 \left(\sum_{i=1}^{m}f_{\ell,i}K_m^{i-1}\right)e_m.
\]
The polynomial in parentheses is the degree-$(m-1)$ Taylor polynomial of
$\phi_\ell$ at $K_m$.  Since $K_m^m=0$, it equals
$\phi_\ell(K_m)$ exactly.
\end{proof}

Formula~\eqref{eq:implicit-companion-action} requires only $G_m$ and the
vectors $\phi_\ell(K_m)e_m$.  In the next section, we will discuss how to evaluate $\phi_\ell(K_m)e_m$ efficiently.

\subsection{Incremental evaluation of matrix-function actions}
\label{sec:scalar-function-updates}

The recovery formula~\eqref{eq:implicit-companion-action} requires the vectors
$\phi_\ell(K_m)e_m$.  We now describe how to update these vectors as $K_m$
gains a column.  Since $K_m^m=0$, evaluating an analytic function at
$K_m$ requires only its Taylor polynomial of degree at most $m-1$.  Moreover,
the nested form of $K_m$ implies that $\phi_\ell(K_{m-1})$ is the leading
principal block of $\phi_\ell(K_m)$ \cite{Higham2010}.
The following application-specific updates attain the $\mathcal O(m^2)$ cost 
per step.

\paragraph{Fixed-degree polynomials}
Let $f$ have a fixed degree $r$. Horner's rule computes $\phi(K_m)e_m$ in $\mathcal{O}(rm^2)=\mathcal{O}(m^2)$ operations.

\paragraph{Rational functions}
Let $f=P/Q$ and assume $Q(0)\ne0$. Since
$P-f(0)Q$ is divisible by $z$, $\phi(K_m)$ can be evaluated as
\[
 \phi(K_m)=Q(K_m)^{-1}
 \left(\frac{P-f(0)Q}{z}\right)(K_m).
\]
The quotient in parentheses is a polynomial.  Because $K_m$ is strictly upper
triangular, $Q(K_m)$ is upper triangular with the nonzero diagonal entry $Q(0)$.
For fixed degrees, updating the last column and solving the triangular system
cost $\mathcal{O}(m^2)$.

\paragraph{Affine square roots}
For $f(z)=\gamma\sqrt{c+\omega z}$ with $c\ne0$, choose the analytic branch by
fixing the scalar value $\sqrt c$, and define
$T_m=(I+(\omega/c)K_m)^{1/2}$.  If
$K_m=[\begin{smallmatrix}K_{m-1}&k\\0&0\end{smallmatrix}]$, then
\[
 T_m=\begin{bmatrix}T_{m-1}&t\\0&1\end{bmatrix},
 \qquad (T_{m-1}+I)t=(\omega/c)k,
\]
which is the scalar-border form of the classical triangular square-root
recurrence \cite{Bjorck1983}.  The divided difference satisfies
\[
 \phi(K_m)e_m=\gamma\frac{\omega}{\sqrt c}
 (T_m+I)^{-1}e_m.
\]
Both $T_{m-1}+I$ and $T_m+I$ are upper triangular with diagonal entries $2$.
The two triangular solves therefore cost $\mathcal{O}(m^2)$.

\paragraph{Exponentials and trigonometric functions}
For $f(z)=\exp(\alpha z)$, we specialize the incremental triangular
evaluation strategy of \cite{Kressner2017} to the coupled
exponential and divided-difference quantities.  Let $r_{13}(z)=P_{13}(z)/Q_{13}(z)$
be the $[13/13]$ Pad\'e approximant to the exponential function $e^z$, normalized by
$P_{13}(0)=Q_{13}(0)=1$.  Using the binary64 threshold
$\theta_{13}=5.371920351148152$ from
\cite[Table~2.3]{Higham2005}, we choose
\[
 \nu=\left\lceil\log_2\max\{1,|\alpha|/\theta_{13}\}\right\rceil,
 \qquad h=\alpha/2^\nu,
\]
so that the scaling is fixed throughout the nested updates.  Define
\[
 D_h(z)=\frac{P_{13}(hz)-Q_{13}(hz)}{z}.
\]
The numerator has zero constant term, so $D_h$ is a polynomial and no
inversion of the singular matrix $K_m$ is involved.  Initialize
\[
 E^{(0)}=Q_{13}(hK_m)^{-1}P_{13}(hK_m),\qquad
 \Phi^{(0)}=Q_{13}(hK_m)^{-1}D_h(K_m),
\]
where $E^{(0)}$ and $\Phi^{(0)}$ can be seen as approximate matrix-function values at $K_m$ 
of functions $e^{hz}$ and $(e^{hz}-1)/z$, respectively.
The scalar identity
$e^{2hz}-1=(e^{hz}-1)(e^{hz}+1)$, with analytic continuation of the divided
differences at zero, yields the recovery recurrences
\[
 E^{(j+1)}=(E^{(j)})^2,\qquad
 \Phi^{(j+1)}=\Phi^{(j)}(E^{(j)}+I),
 \quad j=0,\ldots,\nu-1.
\]
Then $\Phi^{(\nu)}e_m$ approximates the required action $\phi(K_m)e_m$.
Since $K_m$ extends $K_{m-1}$ by one column, the leading blocks of all
intermediate matrices can be retained from the previous step.  Updating only
the new last columns in the polynomial evaluations, solves, and recovery
stages costs $\mathcal O((13+\nu)m^2)$ arithmetic and storage.  Sine and cosine are handled
by fixed linear combinations of the corresponding complex-exponential
actions.

\subsection{Algorithm and complexity}
\label{sec:ir-algorithm}

Algorithm~\ref{alg:ir-toeplitz-infgmres} summarizes the method above. We call this algorithm Implicitly rebased Toeplitz infGMRES (IR-TiGMRES).

Now we analyze the computational complexity of IR-TiGMRES. 
Following the convention in Section~\ref{sec:recurrences}, the bounds below
omit the common arithmetic cost $F_{\rm LU}(n)+pS_{\rm LU}(n)$ and
LU-factor storage. Under Hypothesis~\ref{hyp:separated-functions}, suppose that the
matrix-function actions are computed using the incremental
evaluators of Section~\ref{sec:scalar-function-updates}.
As discussed in Section~\ref{sec:scalar-function-updates}, the matrix-function 
actions require $\mathcal O(dm^2)$
arithmetic operations at step $m$. The products with $G_{m-1}$,
the sparse matrix actions, and the incremental QR update together
require $\mathcal O(dnm+nm+m^2)$ arithmetic operations per step. Summing over
$m=2,\ldots,p$, including initialization, therefore gives
$\mathcal O(d(np^2+p^3))$ arithmetic operations.
The recovery vectors, compact matrices, and nested evaluator data
require $\mathcal O(np+dp^2)$ storage.

\begin{algorithm}[htbp]
\caption{Implicitly rebased Toeplitz infGMRES (IR-TiGMRES)}
\label{alg:ir-toeplitz-infgmres}
\begin{algorithmic}[1]
\REQUIRE Split form \eqref{eq:separated-functions},
maximum Arnoldi length
$2\leq p\leq q$, right-hand side $z$, parameter values $s_1,\ldots,s_N$
and a specified matrix-function evaluator
for every $f_\ell$
\ENSURE Approximations $\widetilde x(s_j)$,
$j=1,\ldots,N$
\STATE Compute and store one LU factorization of $A_0$
\STATE Solve $A_0a_1=z$ with the stored factors; apply
$P_1v=v-a_1(a_1^*v)/\|a_1\|_2^2$ implicitly
\STATE $G_1=a_1/\|a_1\|_2$, $K_1=0$
\FOR{$m=2,\ldots,p$}
  \FOR{$\ell=1,\ldots,d-1$}
    \STATE Evaluate $\widehat v_{\ell,m-1}\approx\phi_\ell(K_{m-1})e_{m-1}$ on the
    specified branch
  \ENDFOR
  \STATE Solve $A_0u_m=-\sum_{\ell=1}^{d-1}
  M_\ell G_{m-1}\widehat v_{\ell,m-1}$ using the stored factors
  \STATE Update
  $\begin{bmatrix}P_1[u_2,\ldots,u_m]\\I_{m-1}\end{bmatrix}
  =\widehat Q_{m-1}\widehat R_{m-1}$ by incremental Gram--Schmidt
  \STATE Form $G_m$, $K_m$ from
  \eqref{eq:G_m}--\eqref{eq:K_m}, and $S_m$ from \eqref{eq:S-update}
\ENDFOR
\STATE Form $\underline H_p$ from \eqref{eq:H-from-S}
\FOR{$j=1,\ldots,N$}
  \STATE Solve \eqref{eq:small-ls} for $y_{s_j}$
  \STATE $\widetilde x(s_j)=[a_1/\|z\|_2,u_2,\ldots,u_p]y_{s_j}$
\ENDFOR
\end{algorithmic}
\end{algorithm}

\begin{remark}[Grouped TOAR infGMRES under the split form]
\label{rem:grouped-toar}
Hypothesis~\ref{hyp:separated-functions} also permits a cheaper companion
action in TOAR infGMRES.  Write $q_{i,m}$ as $q_{i,m}=O_mc^{i,m}$, where
$c^{i,m}\in \mathbb C^m$ and
$O_m\in \mathbb C^{n\times m}$ is the orthonormal basis of $G_m$.  Grouping
the Taylor coefficients by the fixed matrices $M_\ell$ gives
\[
 \sum_{i=1}^m A_iq_{i,m}
 =\sum_{\ell=1}^{d-1} M_\ell O_m
   \left(\sum_{i=1}^m f_{\ell,i}c^{i,m}\right).
\]
Since $d$ is fixed, the coordinate sums cost $\mathcal O(dm^2)$ and the
basis products and sparse matrix actions cost $\mathcal O(dnm)$
per step.  We call this variant Grouped TOAR infGMRES (G-TOAR infGMRES).  The coordinate
orthogonalization of $c^{i,m}$ still needs $\mathcal O(m^3)$ arithmetic and 
$\mathcal O(m^2)$ storage per step.  
Thus, for $p$ steps, G-TOAR infGMRES has arithmetic complexity
$\mathcal O(d(np^2+p^3)+p^4)$ and storage complexity $\mathcal O(np+p^3)$.
Table~\ref{tab:separated-complexity} compares it with IR-TiGMRES using the
same split form. So compared with G-TOAR, IR-TiGMRES shows advantages both in 
arithmetic and storage complexity, especially
when the number of steps $p$ is large.
\end{remark}

\begin{table}[H]
\caption{Basis-construction complexities under
Hypothesis~\ref{hyp:separated-functions}.  The IR-TiGMRES additionally
requires each $f_\ell$ in the discussed area of Section~\ref{sec:scalar-function-updates}.}
\label{tab:separated-complexity}
\centering
\small
\begin{tabular}{lll}
\toprule
Method & Arithmetic complexity & Storage complexity\\
\midrule
G-TOAR infGMRES
 & $\mathcal O(d(np^2+p^3)+p^4)$
 & $\mathcal O(np+p^3)$\\
IR-TiGMRES
 & $\mathcal O(d(np^2+p^3))$
 & $\mathcal O(np+dp^2)$\\
\bottomrule
\end{tabular}
\end{table}

\section{A residual lower bound and parameter scaling}
\label{sec:residual-floor}
The structured representation of $Q_m$ in Section~\ref{sec:ir-compact-basis} also yields a 
residual lower bound for infGMRES.  

We denote the minimized norm
in \eqref{eq:small-ls} by $\mathcal R_m(s)$:
\[
  \mathcal R_m(s)=\min_{y\in\mathbb{C}^m}
  \left\|\left(\Ibar{m}-s\underline H_m\right)y-\|z\|_2 e_1\right\|_2.
\]

Let $K_m$ have the form given in \eqref{eq:K_m}.
Its strict upper triangularity and $G_m^*G_m+K_m^*K_m=I_m$ give
$K_m^m=0$ and $\|K_m\|_2\leq1$. 
\begin{lemma}[Compact formula for the infGMRES residual]
\label{lem:compact-residual}
Let $b_m^*=e_1^*R_m^{-1}$.  For $s\ne0$,
\[
 \frac{\mathcal R_m(s)}{\|z\|_2}
 =\frac{1}{\sqrt{1+\|z\|_2^2\|(sI_m-K_m)^{-*}b_m\|_2^2}}.
\]
\end{lemma}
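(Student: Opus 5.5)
The plan is to work in the Toeplitz coordinates of \eqref{eq:direct-toeplitz-coordinates} rather than with $\underline H_m$ directly. That reduction was derived from the orthonormality of $\diag(z/\|z\|_2,Q_m)$, so it preserves residual norms and gives
\[
 \mathcal R_m(s)=\min_{y}\bigl\|D_m(\Ibar{m}-s\Jbar{m})y-\|z\|_2e_1\bigr\|_2,
 \qquad D_m\coloneqq\diag(\|z\|_2,R_m).
\]
For $s\neq0$ the $(m+1)\times m$ matrix $B_s\coloneqq\Ibar{m}-s\Jbar{m}$ has full column rank, since its top $m\times m$ block $I_m-sJ_m^*$ is unit lower triangular. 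Because $D_m$ is nonsingular, $\Range(D_mB_s)$ is a hyperplane in $\mathbb C^{m+1}$. The minimum is then the distance from $\|z\|_2e_1$ to that hyperplane, namely $\|z\|_2\,|h^*e_1|/\|h\|_2$, where $h$ is any nonzero normal vector with $h^*D_mB_s=0$.

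Next I would compute the normal vector explicitly. Write $h=D_m^{-*}c$, so the condition becomes $c^*B_s=0$. Column $j$ of $B_s$ is $e_j-se_{j+1}$, so this reads $\bar c_j=s\bar c_{j+1}$. Taking $c_1=1$ gives $c^*=[1,s^{-1},\ldots,s^{-m}]$. Its tail is exactly the row vector $e_1^*(sI_m-J_m)^{-1}=\sum_{k\ge0}s^{-k-1}e_1^*J_m^k$, which terminates because $J_m^m=0$. Then $h^*e_1=c_1/\|z\|_2=1/\|z\|_2$, and the tail of $h^*$ is $e_1^*(sI_m-J_m)^{-1}R_m^{-1}$.

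The key identification uses $K_m=R_mJ_mR_m^{-1}$, which gives $(sI_m-K_m)^{-1}=R_m(sI_m-J_m)^{-1}R_m^{-1}$. With $b_m^*=e_1^*R_m^{-1}$ this yields
\[
 b_m^*(sI_m-K_m)^{-1}=e_1^*(sI_m-J_m)^{-1}R_m^{-1}.
\]
This is precisely the tail of $h^*$, so its conjugate transpose is $(sI_m-K_m)^{-*}b_m$. Consequently $\|h\|_2^2=\|z\|_2^{-2}+\|(sI_m-K_m)^{-*}b_m\|_2^2$. Dividing the distance formula by $\|z\|_2$ then gives the stated expression.

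The main obstacle is bookkeeping rather than ideas. I need to get the conjugations right in the normal-vector condition, and to confirm that $\Range(D_mB_s)$ is genuinely a hyperplane, which requires $s\ne0$ and $R_m$ nonsingular. Both hold here because $a_1\neq0$ gives $V_m$ full column rank.
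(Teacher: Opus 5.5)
Your proof is correct and is essentially the paper's argument. Since $\Ibar{m}=\begin{bmatrix}e_1^*\\J_m\end{bmatrix}$, your matrix satisfies $D_mB_s=\begin{bmatrix}\|z\|_2b_m^*\\K_m-sI_m\end{bmatrix}R_m$, so you compute the normal to the same hyperplane the paper uses; you just solve for it in $J_m$-coordinates and transport it back with $D_m^{-*}$ and $K_m=R_mJ_mR_m^{-1}$. One harmless slip: $B_s$ has full column rank for every $s$, and $s\ne0$ is needed only to normalize $c_1=1$ and to invert $sI_m-J_m$.
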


\begin{proof}
$\mathcal R_m(s)$ is the distance from $\|z\|_2 e_1$
to the column space of $\Ibar{m}-s\underline H_m$. We determine
the direction orthogonal to this column space.

By \eqref{eq:H-from-S}, the nested QR factors, and
$K_m=R_mJ_mR_m^{-1}$, we have
\begin{align*}
 \left(\Ibar{m}-s\underline H_m\right)
 \begin{bmatrix}\|z\|_2&0\\0&R_{m-1}\end{bmatrix}R_m^{-1}
 &=
 \begin{bmatrix}\|z\|_2 b_m^*\\K_m\end{bmatrix} - s\begin{bmatrix}0\\I_{m}\end{bmatrix}\\
 &=
 \begin{bmatrix}\|z\|_2 b_m^*\\K_m-sI_m\end{bmatrix}.
\end{align*}
Because $\begin{bmatrix}\|z\|_2&0\\0&R_{m-1}\end{bmatrix}R_m^{-1}$ is nonsingular, so $\Ibar{m}-s\underline H_m$ and $\begin{bmatrix}\|z\|_2 b_m^*\\K_m-sI_m\end{bmatrix}$ have the same column space.

For $s\ne0$, nilpotency of $K_m$ makes $sI_m-K_m$ nonsingular.
Thus this column space has a one-dimensional orthogonal complement,
which is spanned by
\[
 \begin{bmatrix}
  1\\ \|z\|_2(sI_m-K_m)^{-*}b_m
 \end{bmatrix}.
\]
Then $\mathcal R_m(s)$ is the length of the projection of
$\|z\|_2 e_1$ onto this direction. So
\[
 \frac{\mathcal R_m(s)}{\|z\|_2}
 =
 \frac{1}
 {\sqrt{1+\|z\|_2^2\|(sI_m-K_m)^{-*}b_m\|_2^2}}.
\]
\end{proof}

\begin{theorem}[An infGMRES residual lower bound]
\label{thm:general-residual-floor}
Let $C_m=\|[A_0,\ldots,A_{m-1}]\|_2$.  For every $|s|>1$,
\begin{equation}
 \frac{\mathcal R_m(s)}{\|z\|_2}\geq
 \frac{|s|-1}{\sqrt{(|s|-1)^2+C_m^2}}.
 \label{eq:general-residual-floor}
\end{equation}
Let $C_\infty^2\coloneqq \sum_{j\geq0}\|A_j\|_2^2$, then $C_m\le C_\infty$,
so we can replace $C_m$
by $C_\infty$. Moreover, if $A(s)$ is analytic in a disk centered
at zero with radius greater than one, then $C_\infty < \infty$.
\end{theorem}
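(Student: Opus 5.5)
The plan is to start from the compact residual formula of Lemma~\ref{lem:compact-residual}. The right-hand side there is a decreasing function of $\|z\|_2\|(sI_m-K_m)^{-*}b_m\|_2$, so it suffices to prove the upper bound
\[
 \|z\|_2\,\|(sI_m-K_m)^{-*}b_m\|_2\le \frac{C_m}{|s|-1}, \qquad |s|>1 .
\]
Substituting this bound into the formula gives $\mathcal R_m(s)/\|z\|_2\ge 1/\sqrt{1+C_m^2/(|s|-1)^2}$. This is exactly \eqref{eq:general-residual-floor}.

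I split the bound into two factors. The resolvent factor is handled first. We already know $\|K_m\|_2\le1$ from $G_m^*G_m+K_m^*K_m=I_m$. For $|s|>1$ a Neumann series therefore gives $\|(sI_m-K_m)^{-1}\|_2\le 1/(|s|-\|K_m\|_2)\le 1/(|s|-1)$. The adjoint has the same norm, so $\|(sI_m-K_m)^{-*}b_m\|_2\le \|b_m\|_2/(|s|-1)$.

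The key step, and the only nonroutine one, is the estimate $\|z\|_2\|b_m\|_2\le C_m$. I will get it from an annihilation identity for the Toeplitz matrix $V_m$. Apply $[A_0,A_1,\ldots,A_{m-1}]$ to $V_m$. Column $k$ of the product is $\sum_{j=0}^{k-1}A_ja_{k-j}$. By the recurrence \eqref{eq:a-recurrence}, this sum equals $A_0a_1=z$ for $k=1$ and vanishes for $k\ge2$. Hence
\[
 [A_0,\ldots,A_{m-1}]V_m=z e_1^*.
\]
Substituting $V_m=Q_mR_m$ gives $[A_0,\ldots,A_{m-1}]Q_m=z e_1^*R_m^{-1}=z b_m^*$. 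Taking $2$-norms and using $\|Q_m\|_2=1$ yields $\|z\|_2\|b_m\|_2=\|zb_m^*\|_2\le C_m$, which combined with the resolvent bound proves the claim.

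For the remaining assertions, note that a block row satisfies $\|[X_0,\ldots,X_{m-1}]\|_2^2=\|\sum_j X_jX_j^*\|_2\le\sum_j\|X_j\|_2^2$. This gives $C_m\le C_\infty$, and since the lower bound is decreasing in $C_m$ we may replace $C_m$ by $C_\infty$. Finally, suppose $A$ is analytic on a disk of radius $\rho>1$. Pick $1<r<\rho$ and let $M=\max_{|s|=r}\|A(s)\|_2$. The Cauchy estimates give $\|A_j\|_2\le M r^{-j}$, so $C_\infty^2\le M^2\sum_j r^{-2j}<\infty$.
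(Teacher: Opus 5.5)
Your proof is correct and follows the paper's argument essentially step for step. Both use the annihilation identity $[A_0,\ldots,A_{m-1}]V_m=ze_1^*$ to get $\|z\|_2\|b_m\|_2\le C_m$, bound $\|(sI_m-K_m)^{-1}\|_2\le(|s|-1)^{-1}$ via $\|K_m\|_2\le 1$, and combine these with Lemma~\ref{lem:compact-residual}; the block-row norm estimate and Cauchy's estimates then give the remaining assertions. The only cosmetic difference is that you bound the resolvent with the general Neumann-series estimate $1/(|s|-\|K_m\|_2)$, whereas the paper truncates the series using $K_m^m=0$.
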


\begin{proof}
The generator recurrence \eqref{eq:a-recurrence} gives
$[A_0,\ldots,A_{m-1}]V_m=ze_1^*$, so
\[
 [A_0,\ldots,A_{m-1}]Q_m=zb_m^*.
\]
Taking 2-norm yields $\|z\|_2\|b_m\|_2\leq C_m$.

For $|s|>1$, $\|K_m\|\leq 1$ and $K_m^m=0$ give
\[
 \|(sI_m-K_m)^{-1}\|_2
 =\bigg\|s^{-1}\sum_{j=0}^{m-1}\left(\frac{K_m}{s}\right)^{j}\bigg\|_2
 \leq\sum_{j=0}^{m-1}|s|^{-j-1}\leq(|s|-1)^{-1}.
\]
Now with Lemma~\ref{lem:compact-residual}, we can prove
\eqref{eq:general-residual-floor}.  The inequality $C_m\le C_\infty$ follows from
\[
 C_m^2=\Big\|\sum_{j=0}^{m-1}A_jA_j^*\Big\|_2\leq\sum_{j=0}^{m-1}\|A_j\|_2^2\leq C_\infty^2.
\]
If $A(s)$ is analytic in a disk centered
at zero with radius $R>1$, then choose $1<r<R$ and by using
Cauchy's integral formula, we have
\[\|A_j\|_2=\Big\|\frac{1}{2\pi i}\oint_{|\zeta|=r} \frac{A(\zeta)}{\zeta^{j+1}} \text{d}\zeta\Big\|_2
\leq\max_{|\zeta|=r}\|A(\zeta)\|_2r^{-j},\]
which implies $C_\infty <\infty$.
\end{proof}

\begin{remark}
Notice that when we apply infGMRES to solve $A(s)x(s)=z$ for $|s|>1$, $A(s)$ is always assumed a priori to be analytic
in the disk centered at zero with radius $|s|$, so $C_\infty < \infty$ and the lower bound is strictly positive.
\end{remark}

\paragraph{Effect of scaling}
Appropriate parameter scaling can significantly improve the convergence of
infGMRES \cite{Correnty2024,Liu2025}. The residual lower bound above helps
explain this effect.
For an expansion center $s_0$ with $A(s_0)$ nonsingular, choose a scaling factor $w>0$, set
\[
 \widehat A(\sigma)=A(s_0+w\sigma),\qquad
 \sigma=\frac{s-s_0}{w}.
\]
The scaled Taylor coefficients of $\widehat A(\sigma)$ at $\sigma = 0$ are $\widehat A_j=w^jA^{(j)}(s_0)/j!$.
Under such scaling, 
set
\[
 t=|\sigma|,\qquad
 C=\widehat C_m(w)\coloneqq \|[\widehat A_0,\ldots,\widehat A_{m-1}]\|_2,
\]
then Theorem~\ref{thm:general-residual-floor} shows that 
infGMRES applied to solve $\widehat A(\sigma)x(\sigma) = z$ has the lower bound $\|z\|_2 F(t,C)$, where
\[
 F(t,C)=\frac{t-1}{\sqrt{(t-1)^2+C^2}},\qquad t>1,\quad C>0.
\]

$F(t,C)$ is strictly increasing in $t$ and strictly decreasing in $C$.
For a fixed target $s$, increasing $|w|$ decreases $t$
and makes $\widehat C_m(w)$ non-decreasing, as the latter is because $\sum_{j=0}^{m-1}\widehat A_j\widehat A_j^*$ increases in the
positive semi-definite order. Thus, both changes weaken the residual lower
bound while $t>1$.  This explains why using a large weight can significantly improve the accuracy of infGMRES, especially when the Taylor coefficients
are small.

\section{Numerical experiments}
\label{sec:experiments}

We set $p=q$ in every experiment, so $p$ is both the Taylor degree and the
Arnoldi length. 
All experiments were run in MATLAB R2023b using eight
computational threads on an exclusively allocated Linux node with two
AMD EPYC 9754 processors.

We call Dynamic TiGMRES with $\tau=0$ fully refreshed TiGMRES and use it
as a baseline because it has completely identical arithmetic and storage complexities 
with TOAR infGMRES, as discussed in Section~\ref{sec:dynamic}.  
Dynamic TiGMRES uses $\tau = 100$, because we want to trigger refreshing when the indicator 
suggests a potential loss of approximately two decimal digits.

For the parameterized systems, we use the relative residual
\[
 r_k(s)=\frac{\|A(s)\widetilde x_k(s)-z\|_2}{\|z\|_2}
\]
to measure the convergence.

The reported runtime in each experiment is the median of seven runs.  We first make one untimed
run, then rotate the method order across the timed runs. 

\subsection{Time-delay system}

We consider the time-delay system
\begin{align*}
 \dot x(t)&=A_0x(t)+A_1x(t-1)-bu(t),\\
 y(t)&=C^Tx(t),
\end{align*}
whose transfer function is
\[
 H(s)=C^T(-sI+A_0+A_1e^{-s})^{-1}b.
\]
Let $A(s) = -sI+A_0+A_1e^{-s}$ and we need to solve $A(s)x(s) = b$.
We follow the matrices and parameter choices of the delay example in
\cite{Correnty2024}, 
where the authors use the
expansion center $s_0=0$ and scaling factor
$w=1$ for both $s=0.1$ and $s=0.4$.

In this experiment, we test our four algorithms: fully refreshed TiGMRES, 
Toeplitz infGMRES, Dynamic TiGMRES with $\tau=100$, and IR-TiGMRES.
Figure~\ref{fig:time-delay} shows the residual histories.  At $s=0.1$, all four
methods first reach $10^{-8}$ at iteration~14, with final residuals of order
$10^{-13}$.  At $s=0.4$, fully refreshed TiGMRES, Dynamic TiGMRES, and
IR-TiGMRES first reach $10^{-8}$ at iteration~31, with final residuals lower than $10^{-13}$.
Toeplitz infGMRES instead stops improving at $3.55\times10^{-4}$ because of
the cancellation described in Section~\ref{sec:stability}.  Dynamic TiGMRES and
IR-TiGMRES remain stable.

\begin{figure}[htbp]
\centering
\begin{subfigure}[b]{0.47\textwidth}
  \centering
  \includegraphics[width=\linewidth]{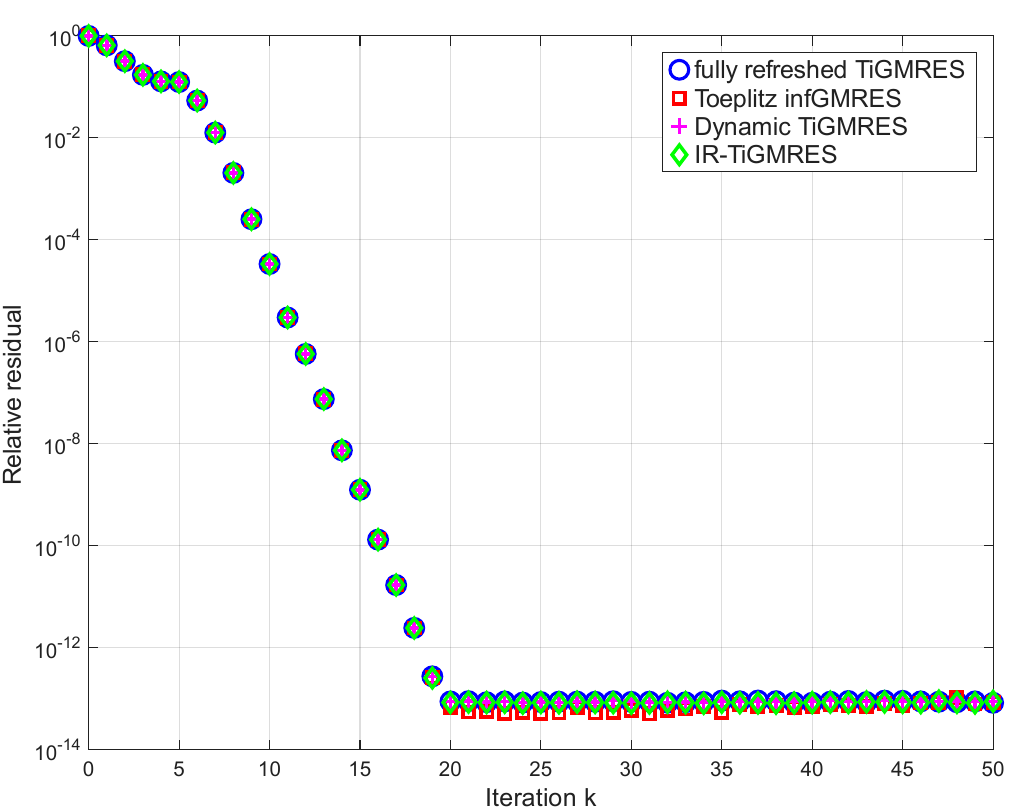}
  \caption{$s=0.1$.}
\end{subfigure}
\hfill
\begin{subfigure}[b]{0.47\textwidth}
  \centering
  \includegraphics[width=\linewidth]{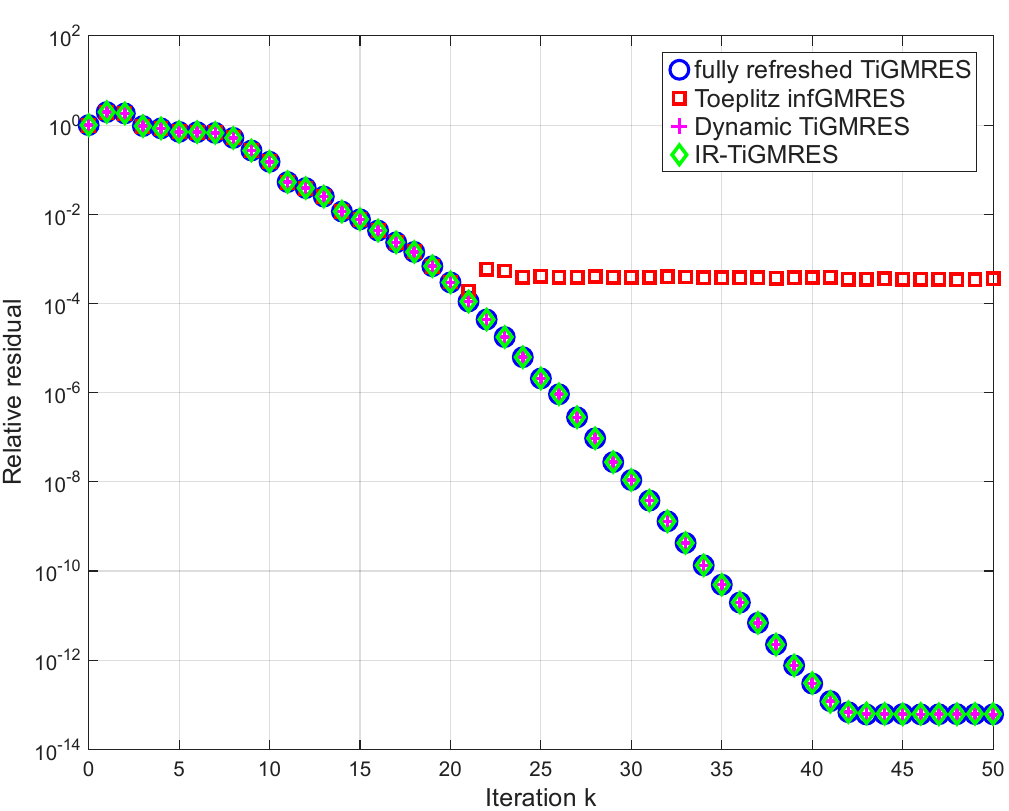}
  \caption{$s=0.4$.}
\end{subfigure}
\caption{Relative residual histories for the time-delay system. 
(a) shows the residual versus iteration results for $s=0.1$ and (b) shows the results for $s=0.4$.}
\label{fig:time-delay}
\end{figure}

\subsection{Parameterized Helmholtz equation}
We consider the boundary-value problem
\[
\begin{aligned}
 \bigl(\nabla^2+f_1(s)(1+sk(x))^2+f_2(s)\beta(x)\bigr)u(x)&=h(x),
 &&x\in\Omega,\\
 u(x)&=0, &&x\in\partial\Omega,
\end{aligned}
\]
where, for $x=(x_1,x_2)$,
\[
k(x)=
\begin{cases}
1+x_1\sin(\alpha\pi x_1),&x_1\in[0,1/2),\\
1+(1-x_1)\sin(\alpha\pi x_1),&x_1\in[1/2,1],
\end{cases}
\]
$h(x)=e^{-\alpha x_1}$, $\beta(x)=\sin(2\pi x_1)$,
$f_1(s)=s$, $f_2(s)=\sin s$, and $\alpha=30$.  Finite element
discretization gives
\[
 A(s)=A_0+A_1s+2A_2s^2+A_3s^3+A_4\sin s,
\]
with sparse coefficient matrices.  We use the same sparse matrices $A_i$ as in \cite{Correnty2024}, and set $s=2.5$,
with expansion center $s_0=0$ and scaling factor $w=5$.

The iteration curves of each algorithm in Figure~\ref{fig:helmholtz}(a) overlap at the plotted
scale.  All four methods first reach $10^{-8}$ at iteration~34 and finish
with relative residuals near $1.47\times10^{-11}$.
Figure~\ref{fig:helmholtz}(b) shows the residual histories versus runtime.
The median end-to-end times are $40.46$s for fully refreshed TiGMRES,
$27.24$s for Toeplitz infGMRES, $30.32$s for Dynamic TiGMRES,
and $19.96$s for IR-TiGMRES.  IR-TiGMRES uses $26.7\%$ less time
than basic Toeplitz, $34.2\%$ less than Dynamic TiGMRES, and
$50.7\%$ less than fully refreshed TiGMRES. 

Although Toeplitz infGMRES and IR-TiGMRES have the same asymptotic $O(np^2+p^3)$
arithmetic complexity, their time performance differs in this test.
Here are the reasons.
For $p=50$, the Taylor convolution in the basic and refreshed Toeplitz
implementations requires $\sum_{k=2}^{p}(k-1)=1225$ sparse matrix-vector
products, whereas the four-term action using the split form in IR-TiGMRES requires only
$4(p-1)=196$. So IR-TiGMRES appears faster than Toeplitz infGMRES. 

Under Hypothesis~\ref{hyp:separated-functions}, a similar grouping operation
can also be easily applied to Toeplitz infGMRES and Dynamic TiGMRES,
but we did not implement it in our experiments.
This is because we regard these two methods as general-purpose
algorithms, rather than methods specifically tailored to a split form with a small $d$.

\begin{figure}[htbp]
\centering
\begin{subfigure}[b]{0.47\textwidth}
  \centering
  \includegraphics[width=\linewidth]{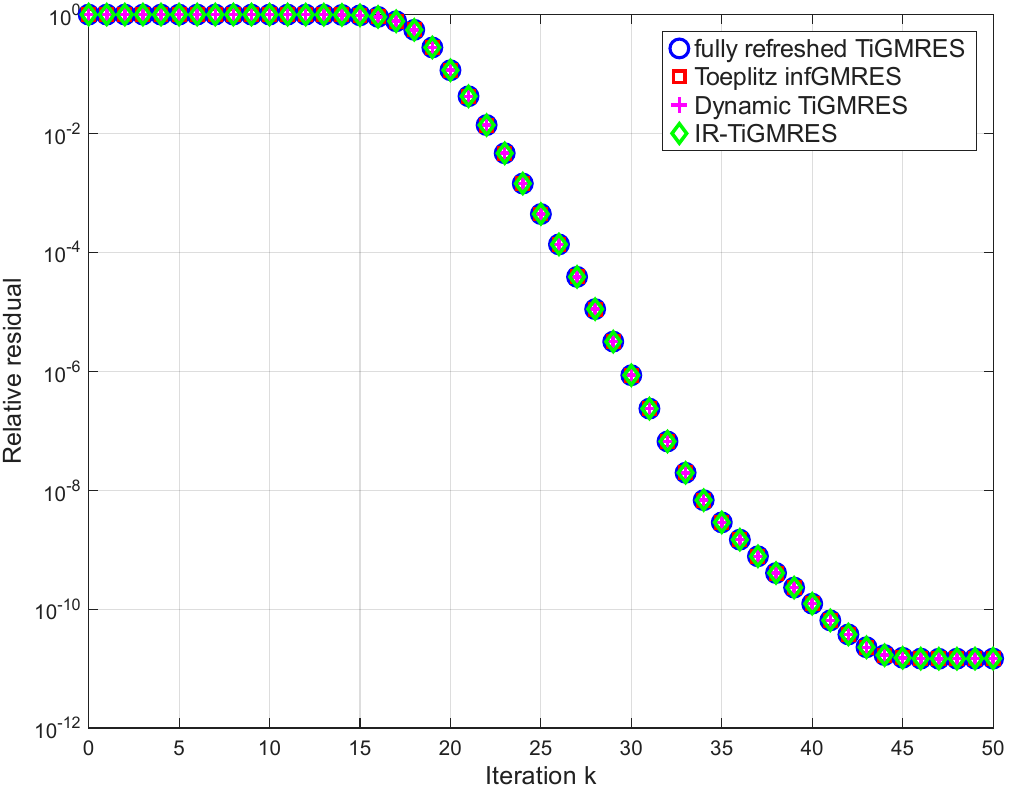}
  \caption{Residual versus iteration.}
\end{subfigure}
\hfill
\begin{subfigure}[b]{0.47\textwidth}
  \centering
  \includegraphics[width=\linewidth]{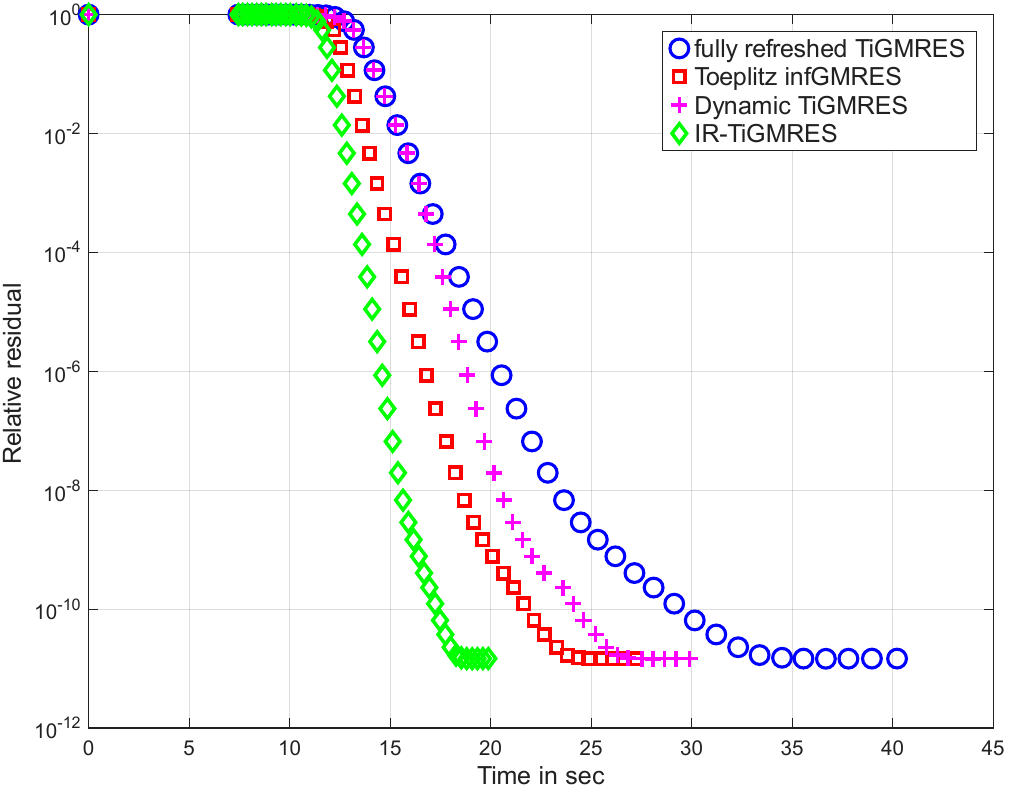}
  \caption{Residual versus runtime.}
\end{subfigure}
\caption{Relative residual histories for the finite element Helmholtz problem with
$s=2.5$.  (a) shows the residual versus iteration results. 
(b) shows the residual versus runtime results.}
\label{fig:helmholtz}
\end{figure}

\subsection{Contour-integral nonlinear eigensolvers}
\label{sec:nep-experiments}

Let $T:\Omega\to\mathbb{C}^{n\times n}$ be analytic, and let the smooth contour
$\Gamma=\partial\Omega$ enclose only simple eigenvalues $\lambda_1,\ldots,\lambda_k$,
and the corresponding eigenvectors $v_1,\ldots,v_k$ of $T$.  

We use the approach in \cite{Liu2025} to compute the eigenpairs.
Beyn's method \cite{Beyn2012} forms
\[
 M_0=\frac{1}{2\pi\mathrm{i}}\oint_\Gamma T(\xi)^{-1}Z\,d\xi,
 \qquad
 M_1=\frac{1}{2\pi\mathrm{i}}\oint_\Gamma \xi T(\xi)^{-1}Z\,d\xi.
\]
For almost any $Z\in \mathbb C^{n\times k}$, if $M_0=V_0\Sigma_0W_0^*$ is the singular 
value decomposition of $M_0$, and construct $\breve M_1 = V_0^*M_1W_0\Sigma_0^{-1}$,
and let $\breve M_1 = S\Lambda S^{-1}$ be the eigenvalue decomposition of $\breve M_1$. Then
$\Lambda = \operatorname{diag}\{\lambda_1,\ldots,\lambda_k\}$ and $V_0S = [v_1,\ldots,v_k]$.  
So the remaining task is to approximate the contour integrals $M_0, M_1$ by using numerical quadrature rules,
which requires solving $T(\xi_j)$ at many quadrature nodes.  These systems
have the same right-hand sides, so infGMRES is a natural choice for solving them efficiently.
\cite{Liu2025}.

\subsubsection{Benchmarks}

We test six problems, which are all chosen from the NLEVP collections\cite{Betcke2013}.  
Figure~\ref{fig:nlevp-contours}
shows their contours, expansion nodes, and computed eigenvalues.

The contours of each problem are the same as in \cite{Liu2025}. But instead of using multiple expansion nodes,
we choose each contour's geometric center
as the only expansion node. This choice can avoid problem-dependent selection of expansion nodes, 
and also allows the Arnoldi process to be run only once.  
However, using fewer expansion nodes may require a larger Taylor degree and Arnoldi length
to improve the accuracy of infGMRES. 

\begin{figure}[!t]
\centering
\begin{subfigure}[t]{0.30\textwidth}
  \centering
  \includegraphics[width=\linewidth]{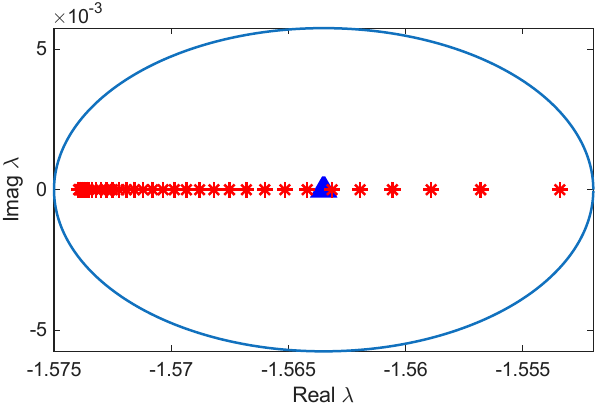}
  \caption{\texttt{spring}.}
\end{subfigure}
\hfill
\begin{subfigure}[t]{0.30\textwidth}
  \centering
  \includegraphics[width=\linewidth]{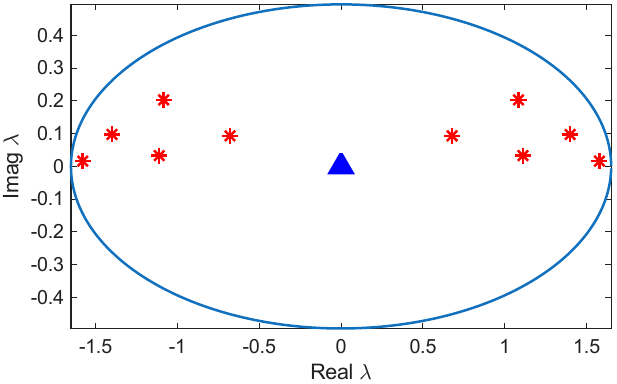}
  \caption{\texttt{acoustic\_wave\_2d}.}
\end{subfigure}
\hfill
\begin{subfigure}[t]{0.30\textwidth}
  \centering
  \includegraphics[width=\linewidth]{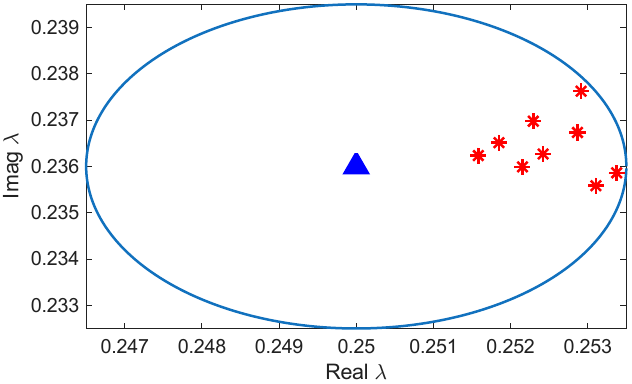}
  \caption{\texttt{butterfly}.}
\end{subfigure}

\par\medskip

\begin{subfigure}[t]{0.30\textwidth}
  \centering
  \includegraphics[width=\linewidth]{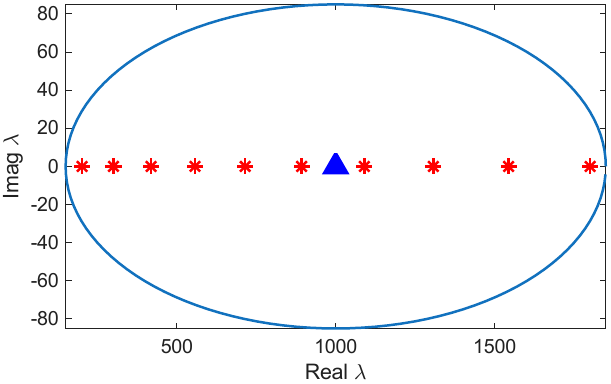}
  \caption{\texttt{loaded\_string}.}
\end{subfigure}
\hfill
\begin{subfigure}[t]{0.30\textwidth}
  \centering
  \includegraphics[width=\linewidth]{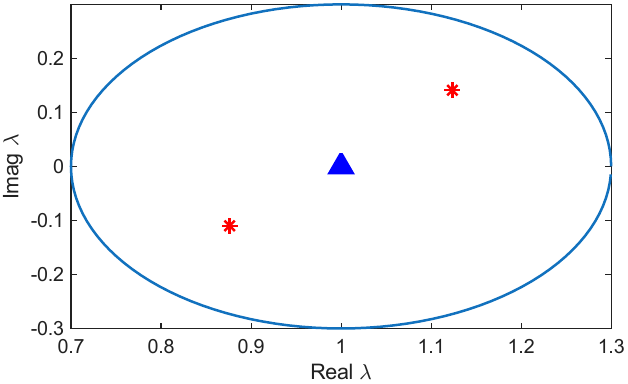}
  \caption{\texttt{railtrack2\_rep}.}
\end{subfigure}
\hfill
\begin{subfigure}[t]{0.30\textwidth}
  \centering
  \includegraphics[width=\linewidth]{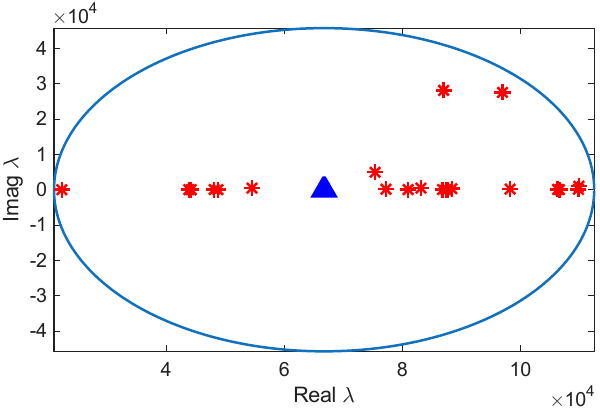}
  \caption{\texttt{gun}.}
\end{subfigure}
\par\medskip
{\small
\setlength{\tabcolsep}{4pt}
\begin{tabular}{@{}cl@{\qquad}cl@{\qquad}cl@{}}
  \raisebox{0.5ex}{\textcolor[rgb]{0,0.447,0.741}{\rule{1.5em}{0.8pt}}}
    & Contour
  & \textcolor[rgb]{0,0.447,0.741}{\large$\blacktriangle$}
    & Expansion node
  & \textcolor{red}{\large$\ast$}
    & Computed eigenvalues
\end{tabular}}
\caption{Contours and expansion nodes for the six NLEVP problems.  The blue
curve is $\Gamma$, the blue triangle is $\lambda_0$, and the red asterisks are
the eigenvalues computed by IR-TiGMRES.  All three methods
use the same contours and
expansion nodes.}
\label{fig:nlevp-contours}
\end{figure}

For a computed eigenpair $(\widehat\lambda,\widehat v)$, we measure the
normwise residual
\begin{equation}
 \rho(\widehat\lambda,\widehat v)=
 \frac{\|T(\widehat\lambda)\widehat v\|_2}
 {\|T(\widehat\lambda)\|_2\|\widehat v\|_2},
 \label{eq:eigenpair-residual}
\end{equation}
where the matrix norm is estimated with MATLAB's \texttt{normest} at tolerance $0.01$.

We test three methods: fully refreshed TiGMRES, Dynamic TiGMRES, and IR-TiGMRES.
For a given problem, all methods use the same contour, expansion node,
scaling factor, Taylor degree, and random seed. 

Table~\ref{tab:nlevp} reports the median end-to-end times.  
Dynamic TiGMRES is faster than fully refreshed TiGMRES in all six problems,
and IR-TiGMRES is fastest in every problem.  Relative to fully refreshed
TiGMRES, their arithmetic mean speedups across the six problems are $1.34$
and $4.39$, respectively.  The smallest speedups occur for
\texttt{railtrack2\_rep}, where the LU factorization dominates the runtime.

Table~\ref{tab:nlevp-scaling-residuals} shows the largest residual for each method, evaluated
from the eigenpairs returned by the last timed run.  The values range from
$5.41\times10^{-14}$ to $1.14\times10^{-10}$.  IR-TiGMRES has the smallest residual in almost every problem. 

\begin{table}[H]
\caption{Problem sizes, scaling factors, runtimes, and speedup factors for the six NLEVP problems.  Here $n$ is the matrix dimension,
$k$ is the requested number of eigenvalues, $N$ is the number of contour nodes,
and $p$ is the Taylor degree.  The scaling factor $w$ is used in
$\widehat T(s)=T(s_0+ws)$.  The times $t_{\rm FR}$, $t_{\rm D}$, and
$t_{\rm IR}$ are medians in seconds for fully refreshed TiGMRES, Dynamic TiGMRES
, and IR-TiGMRES.  The speedup factors are
$S_{\rm D}=t_{\rm FR}/t_{\rm D}$ and
$S_{\rm IR}=t_{\rm FR}/t_{\rm IR}$.}
\label{tab:nlevp}
\centering
\small
\setlength{\tabcolsep}{4pt}
\begin{tabular}{lrrrrrrrrrr}
\toprule
Problem & $n$ & $k$ & $N$ & $p$ & $w$ & $t_{\rm FR}$ & $t_{\rm D}$
& $t_{\rm IR}$ & $S_{\rm D}$ & $S_{\rm IR}$ \\
\midrule
\texttt{spring} & 3000 & 32 & 1000 & 70 & 1.0E-1 & 13.072 & 10.761 & 2.907 & 1.21$\times$ & 4.50$\times$ \\
\texttt{acoustic\_wave\_2d} & 9900 & 10 & 500 & 70 & 1.0E1 & 11.261 & 6.745 & 1.566 & 1.67$\times$ & 7.19$\times$ \\
\texttt{butterfly} & 5041 & 9 & 504 & 50 & 5.0E-1 & 1.975 & 1.610 & 0.695 & 1.23$\times$ & 2.84$\times$ \\
\texttt{loaded\_string} & 20000 & 10 & 128 & 50 & 4.0E3 & 9.692 & 6.968 & 1.800 & 1.39$\times$ & 5.38$\times$ \\
\texttt{railtrack2\_rep} & 35955 & 2 & 128 & 32 & 1.0E1 & 9.651 & 9.399 & 8.798 & 1.03$\times$ & 1.10$\times$ \\
\texttt{gun} & 9956 & 21 & 1000 & 75 & 2.0E5 & 33.208 & 21.716 & 6.263 & 1.53$\times$ & 5.30$\times$ \\
\bottomrule
\end{tabular}
\end{table}

\begin{table}[H]
\caption{Largest eigenpair residuals for the six NLEVP problems.
The columns $\rho_{\max}^{\rm FR}$, $\rho_{\max}^{\rm D}$, and
$\rho_{\max}^{\rm IR}$ give the largest residual
\eqref{eq:eigenpair-residual} for each method.}
\label{tab:nlevp-scaling-residuals}
\centering
\small
\begin{tabular}{lrrr}
\toprule
Problem & $\rho_{\max}^{\rm FR}$ & $\rho_{\max}^{\rm D}$
& $\rho_{\max}^{\rm IR}$ \\
\midrule
\texttt{spring} & $5.41\times10^{-14}$ & $5.45\times10^{-14}$ & $5.54\times10^{-14}$ \\
\texttt{acoustic\_wave\_2d} & $5.05\times10^{-12}$ & $5.23\times10^{-12}$ & $4.89\times10^{-12}$ \\
\texttt{butterfly} & $6.66\times10^{-11}$ & $1.14\times10^{-10}$ & $5.72\times10^{-11}$ \\
\texttt{loaded\_string} & $1.35\times10^{-13}$ & $1.12\times10^{-13}$ & $9.58\times10^{-14}$ \\
\texttt{railtrack2\_rep} & $2.06\times10^{-13}$ & $2.07\times10^{-13}$ & $1.91\times10^{-13}$ \\
\texttt{gun} & $2.76\times10^{-12}$ & $1.28\times10^{-12}$ & $1.54\times10^{-13}$ \\
\bottomrule
\end{tabular}
\end{table}

\section{Conclusions}
\label{sec:conclusions}
In this work, we present Toeplitz infGMRES, which avoids computing full Arnoldi vectors 
occurring in infGMRES. To address the numerical cancellation in this method, 
we develop Dynamic TiGMRES, which constructs full Arnoldi vector whenever the cancellation is detected.
Numerical experiments show that Dynamic TiGMRES is efficient and accurate.

We further develop a structured representation of the Arnoldi basis in infGMRES.
And we apply this structure to the case when $A(s)$ has a split form with a small fixed $d$, 
leading to IR-TiGMRES. Numerical experiments demonstrate computational advantages
of IR-TiGMRES in every problem.

The structured representation also yields a residual lower bound for infGMRES.
We prove that the linearized residual of infGMRES 
cannot converge to zero when $|s|> 1$. This clarifies the role of parameter scaling in 
improving the convergence of infGMRES.

\section*{Acknowledgments}
The authors used OpenAI Codex to refine their original residual lower bound,
which already explained the effect of parameter scaling, into the simpler form
presented in Section~\ref{sec:residual-floor}. Codex also assisted with the
case-by-case discussion of matrix-function actions in Section~\ref{sec:scalar-function-updates}. The authors assume responsibility for all content.

\bibliographystyle{siamplain}
\bibliography{references}
\end{document}